\title[Fiberwise building and stratification]{Fiberwise building and stratification in tensor triangular geometry}
\author[J. O. G\'omez]{Juan Omar G\'omez}
\thanks{}
\address{Fakult\"at f\"ur Mathematik,
Universit\"at Bielefeld, D-33501 Bielefeld, Germany.}
\email{jgomez@math.uni-bielefeld.de}
\newcommand{\comments}[1]{}
\newcommand{\Mod}{\operatorname{Mod}\nolimits}
\newcommand{\perm}{\operatorname{perm}\nolimits}
\newcommand{\Perm}{\operatorname{Perm}\nolimits}
\newcommand{\res}{\operatorname{res}\nolimits}
\newcommand{\StMod}{\operatorname{StMod}\nolimits}
\newcommand{\Spc}{\operatorname{Spc}\nolimits}
\newcommand{\Spec}{\operatorname{Spec}\nolimits}
\DeclareMathOperator{\mmod}{mod}
\newcommand{\mmmod}{\mmod\kern-0.1em\text{-}}%
\def \D{{\mathcal D}}
\def \K{{\mathcal K}}
\def \P{{\mathcal P}}
\def \pp{\mathfrak p }
\def \T{{\mathcal T}}
\newcommand*{\doublerightarrow}[2]{\mathrel{
  \settowidth{\@tempdima}{$\scriptstyle#1$}
  \settowidth{\@tempdimb}{$\scriptstyle#2$}
  \ifdim\@tempdimb>\@tempdima \@tempdima=\@tempdimb\fi
  \mathop{\vcenter{
    \offinterlineskip\ialign{\hbox to\dimexpr\@tempdima+1em{##}\cr
    \rightarrowfill\cr\noalign{\kern.5ex}
    \rightarrowfill\cr}}}\limits^{\!#1}_{\!#2}}}
\newcommand*{\triplerightarrow}[1]{\mathrel{
  \settowidth{\@tempdima}{$\scriptstyle#1$}
  \mathop{\vcenter{
    \offinterlineskip\ialign{\hbox to\dimexpr\@tempdima+1em{##}\cr
    \rightarrowfill\cr\noalign{\kern.5ex}
    \rightarrowfill\cr\noalign{\kern.5ex}
    \rightarrowfill\cr}}}\limits^{\!#1}}}
\newcommand{\nc}{\newcommand}
\nc{\dmo}{\DeclareMathOperator}
\nc{\Weyl}[2]{{#1}/\!\!/{#2}}
\nc{\WGH}{\Weyl{G}{H}}
\nc{\WGK}{\Weyl{G}{K}}
\nc{\WGL}{\Weyl{G}{L}}
\nc{\WGN}{\Weyl{G}{N}}
\theoremstyle{plain}
\newtheorem{theorem}{Theorem}[section]
\newtheorem{proposition}[theorem]{Proposition}
\newtheorem{corollary}[theorem]{Corollary}
\newtheorem{lemma}[theorem]{Lemma}
\theoremstyle{definition}
\newtheorem{remark}[theorem]{Remark}
\newtheorem{recollection}[theorem]{Recollection}
\newtheorem{notation}[theorem]{Notation}
\newtheorem{construction}[theorem]{Construction}
\theoremstyle{equation}
\newtheorem{hypotheses}[theorem]{Hypotheses}
\newtheorem{hypothesis}[theorem]{Hypothesis}
\keywords{Tensor-triangular geometry, stratification, permutation modules, finite group scheme, lattice, stable module category.}
\subjclass[2020]{18F99; 18G80, 18G65, 20C10, 20C20.}
\thanks{}
\begin{document}

\maketitle

\begin{abstract}
We establish conditions on a family of coproduct-preserving tt-functors $f_i\colon \T \to \T_i$ between tt-categories with small coproducts, ensuring that the localizing tensor ideal generated by an object $x \in \T$ is determined by those objects whose image under each $f_i$ lies in the localizing tensor ideal generated by $f_i(x)$ for all $i$. This leads to a fiberwise criterion for stratification in the setting of rigidly-compactly generated tt-categories.
As an application, we prove that the big derived category of permutation modules for a finite group over an arbitrary Noetherian base is stratified. Moreover, our methods extend to the category of representations of a finite group scheme over a Noetherian base, thereby recovering a recent result from the literature.
\end{abstract}

\setcounter{tocdepth}{1}

\tableofcontents


\section{Introduction}

Recent results on the geometry of certain tensor-triangulated categories arising in algebraic geometry and representation theory point toward a \textit{fiberwise phenomenon} in tensor-triangular geometry. For example, Lau \cite{Lau23} proved that the Balmer spectrum of the category of perfect complexes $\mathrm{Perf}(\mathcal{X})$, where $\mathcal{X}$ is the quotient stack of an affine scheme by the action of a finite group, is completely controlled by fiber functors arising from suitable base changes.

On the other hand, in \cite{BBIKP}, the authors introduced a big category of representations $\mathrm{Rep}(G, R)$ for a finite group scheme $G$ over an arbitrary Noetherian base ring $R$, and showed that the geometry of $\mathrm{Rep}(G, R)$ is entirely determined by the fiber functors obtained via base change to the residue fields of $R$. In fact, this result extends Lau’s theorem from the ``small'' to the ``big'' setting, at least in the case where the action of the group on the affine scheme is trivial.

With this in mind, we focus on the question: for a given family of functors $\{f_i\colon \T\to \T_i\}_{i\in I}$, what conditions on $\T$, $\T_i$, and the functors $f_i$ allow us to transport geometric information from the $\T_i$ to geometric information of $\T$?

This question was previously investigated in \cite{BCHS23}, where the authors provide conditions ensuring that the induced map on Balmer spectra is surjective. Our approach differs in that we seek to understand the localizing tensor ideals of $\T$ in terms of those of the categories $\T_i$, rather than focusing on the thick tensor ideals of $\T^c$, as in \textit{loc.\ cit.}

We also emphasize the recent work \cite{BHSZ24}, which is closely related to ours and studies analogous questions for big tt-categories via the homological spectrum, obtaining strong results in that setting. By contrast, our arguments are more direct.

Our main result in the direction of the previous question is the following theorem which appears as Corollary~\ref{coro 2}.

\begin{theorem}\label{thm 1.1}
    Let $\mathcal{I}$ be a set.  Let $\{f_i\colon \T\to \T_i\}_{i\in\mathcal{I}}$ be a family of coproduct-preserving tt-functors between tt-categories with small coproducts. Assume that each $f_i$ has a coproduct-preserving adjoint $g_i$ satisfying the projection formula  (see Section \ref{sec: fiberwise building}). If  $\mathbb{1}_{\T}\in \mathrm{Loc}^\otimes(\coprod_{i\in \mathcal{I}}g_{i}(\mathbb{1}_{\T_i}))$, then for any $x$ and $y$ in $\T$, we obtain the following properties.  
    \begin{enumerate}
        \item \textit{(Detection)} If $f_i(x)=0$ for each $i\in \mathcal{I}$, then $x=0$.
        \item \textit{(Building)} $x\in \mathrm{Loc}^\otimes (y)$ if and only if $f_i(x)\in \mathrm{Loc}^\otimes(f_i(y))$ for all $i\in \mathcal{I}$.   
    \end{enumerate}
\end{theorem}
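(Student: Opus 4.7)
The plan is to use the hypothesis $\mathbb{1}_\D \in \mathrm{Loc}^\otimes(\coprod_{i\in\mathcal{I}} g_i(\mathbb{1}_{\D_i}))$ together with the projection formula as the common lever for both parts~(1) and~(2), and then to bootstrap~(2) with the downstairs stratifications and the injectivity of $\operatorname{Spc}(f_i)$ to obtain fiberwise stratification of $\D$. I will describe the right-adjoint case; the left-adjoint case is formally dual.

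For Detection, I would use the right projection formula to identify $g_i(f_i(x)) \simeq g_i(\mathbb{1}_{\D_i}) \otimes x$. If $f_i(x) = 0$ for every $i\in\mathcal{I}$, then this tensor product vanishes for each $i$; since $g_i$ preserves coproducts, $\bigl(\coprod_i g_i(\mathbb{1}_{\D_i})\bigr) \otimes x = 0$. Tensoring the standing inclusion $\mathbb{1}_\D \in \mathrm{Loc}^\otimes(\coprod_i g_i(\mathbb{1}_{\D_i}))$ with $x$ then places $x$ in $\mathrm{Loc}^\otimes(0) = 0$, so $x=0$.

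For Building, the forward direction is immediate because each $f_i$ is a coproduct-preserving tt-functor. For the converse I would check that the class $\{ z \in \D_i : g_i(z) \in \mathrm{Loc}^\otimes(y)\}$ is a localizing subcategory of $\D_i$ (by exactness and coproduct-preservation of $g_i$), and that it contains every $w \otimes f_i(y)$ via the projection formula $g_i(w \otimes f_i(y)) \simeq g_i(w) \otimes y$; hence it contains $\mathrm{Loc}^\otimes(f_i(y))$. Applied to $f_i(x)$ this yields $g_i(f_i(x)) \in \mathrm{Loc}^\otimes(y)$ for every $i$, so $\coprod_i g_i(f_i(x)) \in \mathrm{Loc}^\otimes(y)$. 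Rewriting this coproduct as $\bigl(\coprod_i g_i(\mathbb{1}_{\D_i})\bigr) \otimes x$ via the projection formula, and tensoring the standing inclusion with $x$, forces $x \in \mathrm{Loc}^\otimes(y)$.

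For the fiberwise stratification conclusion, my plan is to combine~(2) with the downstairs stratifications. For $p \in \operatorname{Spc}(\D^c)$ with $\Gamma_p(\D) \neq 0$ and nonzero $x,z \in \Gamma_p(\D)$, minimality reduces via~(2) to checking $\mathrm{Loc}^\otimes(f_i(x)) = \mathrm{Loc}^\otimes(f_i(z))$ in each $\D_i$. Injectivity of $\operatorname{Spc}(f_i)$ makes $\operatorname{Spc}(f_i)^{-1}(\{p\})$ either empty or a single point $q_i$; functoriality of support then confines $f_i(x)$ and $f_i(z)$ to $\Gamma_{q_i}(\D_i)$ or forces them to vanish, and stratification of $\D_i$ equates the two localizing tensor-ideals whenever the generators are simultaneously nonzero or zero. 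I expect the main obstacle to be the residual mixed case $f_i(x) = 0 \neq f_i(z)$, equivalently to showing that $\ker(f_i) \cap \Gamma_p(\D)$ is either trivial or all of $\Gamma_p(\D)$ for each $i$ and $p$; I would attempt this by promoting the inclusion $\operatorname{supp}_{\D_i}(f_i(w)) \subseteq \operatorname{Spc}(f_i)^{-1}(\operatorname{supp}_\D(w))$ to an equality using the projection formula and the standing hypothesis, so that vanishing of $f_i$ on $\Gamma_p$ is controlled by whether $p$ lies in the image of $\operatorname{Spc}(f_i)$. The weakly Noetherian hypothesis on $\operatorname{Spc}(\D^c)$ then supplies the Local-to-Global principle needed to assemble stratum-minimality into full stratification of $\D$.
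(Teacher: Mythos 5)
Your treatment of Detection and Building is correct and is essentially the paper's own argument: you prove, in effect, Proposition \ref{coro1} (that $\mathrm{Loc}^\otimes(x)=\mathrm{Loc}^\otimes\bigl(\coprod_{i}g_i(f_i(x))\bigr)$, by tensoring the standing hypothesis on $\mathbb{1}_{\D}$ with $x$ and applying the projection formula) and then run exactly the proof of Lemma \ref{coro 2}, pushing $\mathrm{Loc}^\otimes(f_i(y))=\mathrm{Loc}(f_i(y)\otimes\D_i)$ through the exact, coproduct-preserving adjoint $g_i$.

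The stratification clause is where your proposal stops short of a proof. The mixed case you flag, $f_i(x)=0\neq f_i(z)$ for nonzero $x,z\in\mathrm{Loc}^\otimes(\mathbf{g}_{\D}(\mathcal{P}))$ with $\mathcal{P}\in\mathrm{im}(\varphi_i)$, is precisely the crux, and the repair you sketch --- promoting $\mathrm{Supp}_{\D_i}(f_i(w))\subseteq\varphi_i^{-1}(\mathrm{Supp}_{\D}(w))$ to an equality for each individual $i$ --- is not available from the standing hypotheses: Detection only controls the union over all $i$ of the fiberwise supports, and for a nonzero $x\in\mathbf{g}_{\D}(\mathcal{P})\otimes\D$ and an $i$ with $\varphi_i(\mathcal{Q})=\mathcal{P}$ the asserted equality of supports says exactly that $f_i(x)\neq0$, i.e.\ it restates the mixed case rather than resolving it. For comparison, the paper's Theorem \ref{thm: fiberwise} at this point applies minimality of $\D_i$ directly to $f_i(x)\in\mathrm{Loc}^\otimes(\mathbf{g}_{\D_i}(\mathcal{Q}))$, which presupposes the same nonvanishing; that nonvanishing does follow from Detection whenever at most one member of the family has $\mathcal{P}$ in the image of its spectral map (a single functor, or a family with pairwise disjoint images, as for the residue-field functors in the paper's applications). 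So if you wish to allow genuinely overlapping spectral images, you must supply an actual argument excluding the mixed case; ``by the projection formula and the standing hypothesis'' is not yet one.

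Second, your closing claim that weak Noetherianity of $\mathrm{Spc}(\D^c)$ supplies the local-to-global principle is unjustified: the LGP comes for free when the spectrum is Noetherian (Remark \ref{rem:injection on spectra}), but for merely weakly Noetherian spectra the paper assumes it separately in Corollary \ref{coro:fiberwise stratification}. To prove the statement as worded you would have to either assume the LGP, invoke Noetherianity, or descend the LGP from the stratified fibers (which can be done by combining Proposition \ref{coro1} with the identification $f_i(\mathbf{g}_{\D}(\mathcal{P}))\cong\mathbf{g}_{\D_i}(\mathcal{Q})$ of \cite[Proposition 3.12]{BHS21} and the projection formula); your proposal does none of these.
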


A few remarks are in order. First, we do not specify whether the adjoint $g_i$ is left or right adjoint to $f_i$. The reason is that this distinction is irrelevant for our results, and both situations occur naturally in practice. For example, in the setting of rigidly-compactly generated tt-categories, this is automatic: a geometric functor admits a right adjoint, and the adjunction satisfies the projection formula. By contrast, in the non-rigidly-compactly generated setting, the functor $f_i$ under consideration may instead admit a left adjoint satisfying the corresponding projection formula, as we show in Section~\ref{sec: beyond big}.

On the other hand, note that \textit{detection} is a consequence of \textit{building} in the previous result. Nevertheless, it is worth emphasizing this special case, since it makes explicit that the family of functors $\{f_i\}_{i \in \mathcal{I}}$ is jointly conservative.

As an application of the previous result, we provide a criterion for stratification in the context of rigidly-compactly generated tt-categories and it appears as Corollary \ref{coro:fiberwise stratification} later  in this text.

\begin{theorem}
  Let $\{f_i\colon \T\to \T_i\}_{i\in\mathcal{I}}$ be a family of geometric functors between rigidly-compactly generated tt-categories with weakly Noetherian Balmer spectrum. Assume that  $\mathbb{1}_{\T}\in \mathrm{Loc}^\otimes(\coprod_{i\in \mathcal{I}}g_{i}(\mathbb{1}_{\T_i}))$.
    If  each $\T_i$ is stratified, and  $\coprod_{i\in \mathcal{I}}\mathrm{Spc}(f_i)$ is an injection on Balmer spectra,  then $\T$ is stratified as well.
\end{theorem}

 Let us note that a single-functor version of the previous theorem already appears in \cite[Theorem 17.20]{BCHS23b} under the name of nil-descent (see also \cite{Bar21}). However, in the presence of infinitely many non-trivial categories $\T_i$, one cannot in general reduce to a single functor, since an infinite product of big tt-categories is rarely itself big (see, for example, \cite[Remark 2.7]{Gom24a}).

Furthermore, the significance of the previous theorem lies partly in the fact that stratification, in the sense of \cite{BHS21}, is less restrictive than BIK-stratification, which requires a central action of a suitable graded ring. In practice, such a graded ring typically arises as the graded endomorphism ring of the monoidal unit of the given big tt-category. However, there are examples where this ring is not sufficient for the intended purpose; see, for instance, \cite{BG25}.

As an application, we establish stratification for the big derived category of permutation modules over a finite group and for the category of representations of a finite group scheme, both over an arbitrary Noetherian base. The former result is new in this level of generality and relies on the field case proved by Balmer and Gallauer in \cite{BG25}, whereas the latter was recently obtained in \cite{BBIKP} by different methods.

In both cases, our strategy is similar: we combine Neeman’s stratification theorem for the derived category of a commutative Noetherian ring with the corresponding stratification results over fields. The main results in this direction, stated later as Theorem~\ref{thm: stratification for T} and Theorem~\ref{coro: stratification for rep}, are summarized below.

\begin{theorem}
     Let   $R$ be a   commutative Noetherian  ring, $G$ be a finite group and $\mathbb{G}$ be a finite group scheme defined over $R$. Let $\T$ denote either  the big derived category of permutation $RG$--modules $\T(G,R)$ or  the category of representations $\mathrm{Rep}(\mathbb{G},R)$ defined in \cite{BBIKP}. Then $\T$ is stratified, that is, there is bijection 
   \[
   \{\textrm{Localizing ideals of }\T\}\leftrightarrow \{ \textrm{Subsets of } \mathrm{Spc}(\T^c)\}
   \]
   via the Balmer-Favi support. In particular, $\T$ satisfies the Telescope Property, that is, any smashing localization of $\T$ is generated by its compact part.
\end{theorem}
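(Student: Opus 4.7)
The plan is to apply Theorem \ref{thm 1.1} to the family of residue-field base-change functors indexed by $\mathrm{Spec}(R)$. For each prime $\pp\in\mathrm{Spec}(R)$ with residue field $\kappa=\kappa(\pp)$, let $f_\pp\colon \T\to \T_\kappa$ denote the coproduct-preserving tt-functor obtained by extension of scalars along $R\to\kappa$, where $\T_\kappa$ is either $\T(G,\kappa)$ or $\mathrm{Rep}(\mathbb{G}_\kappa,\kappa)$. In both cases this is a genuine base-change situation, so $f_\pp$ admits a coproduct-preserving right adjoint $g_\pp$ given by restriction of scalars, and the right projection formula holds, since $\T$ is naturally a module category over $D(R)$ and all of these functors arise from this action.

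The first nontrivial step is to verify the covering hypothesis of Theorem \ref{thm 1.1}, namely $\mathbb{1}_\T\in \mathrm{Loc}^\otimes\bigl(\coprod_{\pp}g_\pp(\mathbb{1}_{\T_\kappa})\bigr)$. Under the $D(R)$-action the unit $\mathbb{1}_\T$ is obtained from $R$, while each $g_\pp(\mathbb{1}_{\T_\kappa})$ is obtained from $\kappa(\pp)$. It therefore suffices to show $R\in \mathrm{Loc}\bigl(\coprod_\pp \kappa(\pp)\bigr)$ in $D(R)$, and this is an immediate consequence of Neeman's classification of localizing subcategories of $D(R)$, since the collection of residue fields has support equal to all of $\mathrm{Spec}(R)$. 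This already yields the detection and building halves of Theorem \ref{thm 1.1}.

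To invoke the stratification conclusion I would verify the three remaining hypotheses. Stratification of each fiber $\T_\kappa$ is Balmer--Gallauer \cite{BG23b} in the permutation case and the field case of \cite{BBIKP} in the finite group scheme case. For weak Noetherianity of $\mathrm{Spc}(\T^c)$ and injectivity of each $\mathrm{Spc}(f_\pp^c)$, I would use that $\mathrm{Spc}(\T^c)$ fibers over $\mathrm{Spec}(R)$ with fiber over $\pp$ equal to $\mathrm{Spc}(\T_\kappa^c)$: in the $\mathrm{Rep}(\mathbb{G},R)$ case this is explicitly computed in \cite{BBIKP}, while in the permutation case it should follow by combining the $D(R)$-action on $\T(G,R)$ with Balmer--Gallauer's field-case computation. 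Since each fiber is Noetherian and $\mathrm{Spec}(R)$ is Noetherian, the total space is weakly Noetherian, and the fiber inclusions are automatically injective.

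The main obstacle is precisely this last step in the permutation case: identifying $\mathrm{Spc}(\T(G,R)^c)$ well enough to obtain weak Noetherianity and injectivity of the fiber maps without circularity. This likely requires a careful analysis of how the $D(R)$-action decomposes $\mathrm{Spc}(\T(G,R)^c)$ fiberwise over $\mathrm{Spec}(R)$, using that for each $\pp$ the rigid-compact objects of $\T(G,\kappa(\pp))$ correspond under $f_\pp$ to rigid-compact objects of $\T(G,R)$ supported at $\pp$. Once this geometric picture is in place, Theorem \ref{thm 1.1} delivers the stratification bijection, and the telescope property follows formally from stratification together with weak Noetherianity of the spectrum in the sense of \cite{BHS21}.
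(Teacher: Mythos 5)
Your high-level plan is the same as the paper's: apply the fiberwise building/stratification theorem to the family of residue-field base-change functors $f_\pp$, using Neeman's theorem to seed the covering hypothesis, Balmer--Gallauer (resp.\ BIKP) for fiberwise stratification, and then check Noetherianity and injectivity. However, two of the substantive steps are treated too lightly.

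First, transporting the covering hypothesis from $\mathbf{D}(R)$ to $\T$ is not automatic from ``$\T$ is a $\mathbf{D}(R)$-module category and everything arises from the action.'' What actually has to be verified is that the commutative square relating $\mathbf{D}(R)\to\mathbf{D}(k(\pp))$ and $\T\to\T_\kappa$ via inflation is \emph{right adjointable at the unit}, i.e.\ $\mathrm{Infl}(u_\pp(\mathbb{1}))\cong v_\pp(\mathrm{Infl}(\mathbb{1}))$, where $u_\pp$, $v_\pp$ are the right adjoints. This is exactly Propositions \ref{prop:adjointable square for T} and \ref{prop:adjointable square for rep} in the paper, and in the permutation case it is a genuine computation: one has to check that a candidate comparison map is a $G$-quasi-isomorphism by restricting to all subgroups and using the description of $\T(G,R)$ as a localization of $\mathbf{K}(\mathrm{Perm}(G,R))$. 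Your proposal asserts this identification without argument, and it is the place where the projection formula alone does not suffice --- the right adjoint $g_\pp$ a priori does not commute with $\mathrm{Infl}$, so the claim needs proof.

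Second, you correctly flag the Noetherianity of $\mathrm{Spc}(\T(G,R)^c)$ as the main obstacle, but your proposed fix (``decompose $\mathrm{Spc}(\T(G,R)^c)$ fiberwise over $\mathrm{Spec}(R)$'') is precisely the kind of statement that the paper ultimately \emph{derives} from joint conservativity via \cite[Theorem 1.3]{BCHS23}, not something available as an input; as you suspect, using it here risks circularity. The paper's actual route is entirely different and nontrivial: it reduces to $p$-groups via separable descent (Lemma \ref{cover by p-subgroups}), introduces the new notion of \emph{triangular fixed points} (a map on Balmer spectra defined by inverting $\mathrm{Spc}(F)$ for the infl-then-quotient functor, which is a homeomorphism by Lemma \ref{spectrum of Ktilda} even though $F$ itself is no longer an equivalence over a general $R$), and then covers $\mathrm{Spc}(\K(G,R))$ by the spectra $\mathrm{Spc}(\mathbf{D}^b(\mathrm{mod}(\WGH,R)))$, whose Noetherianity comes from Corollary \ref{spectrum of rep} (which in turn rests on van der Kallen's cohomological finite generation result). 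The injectivity of $\mathrm{Spc}(g_\pp)$ is also established by a separate factorization argument through $R_\pp\to k(\pp)$ and the identification $\lambda_r\lambda\simeq(-)_\pp$, rather than from any a priori fiber decomposition. So while the overall architecture of your argument is correct, the two hardest ingredients --- the adjointability check and Noetherianity --- are left as gaps.
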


It should be noted that the previous result requires some information about the topology of the Balmer spectrum of $\T^c$. When $\T = \mathrm{Rep}(\mathbb{G}, R)$, the essential input is the cohomological finite generation property for finite group schemes as shown in \cite{vdk23}. For $\T = \T(G, R)$, we demonstrate in Section \ref{sec:Noetherianity} that its Balmer spectrum is indeed Noetherian and show that the base change functors to residue fields induce a set partition of the spectrum.

\subsection*{Acknowledgments} I am deeply grateful to Paul Balmer for an inspiring discussion. I also wish to thank Martin Gallauer and Manuel Hoff for answering several questions, Henning Krause for his encouragement and interest in this project, and Tobias Barthel for pointing out several relevant references.
 This work was supported by the Deutsche Forschungsgemeinschaft (Project-ID 491392403 – TRR 358).

\section{Preliminaries}\label{sec:preliminaries}

In this section, we recall basic terminology from tensor-triangular geometry that will appear throughout this work, with the sole goal of fixing notation and establishing relevant conventions. Our primary references are \cite{Bal05}, \cite{BF11}, \cite{stevenson2018tour}, and \cite{BHS21}. In particular, we adopt the convention of using `tt' as an abbreviation for `tensor-triangulated' or `tensor-triangular', depending on the context.

\begin{recollection} 
A \textit{big tt-category $\T$} is a rigidly compactly generated tensor-triangulated category $\T$ (e.g. see \cite{BF11} or \cite[Section 2]{stevenson2018tour}). We write $\T^c$ to denote the rigid-compact part of $\T$. A \textit{geometric functor} $f^\ast\colon \T\to \T'$ is a strongly monoidal triangulated functor between big tt-categories that commutes with coproducts. In particular, we get for \textit{free} that $f^\ast$ has a right adjoint $f_\ast$, which itself has a right adjoint $f^!$. Moreover, the adjunction $f^\ast\dashv f_\ast$ satisfies the projection formula; that is, for $x$ in $\T$ and $y$ in $\T'$, we have
\[
f_\ast(f^\ast(x)\otimes y)\cong x\otimes f_\ast(y).
\]
We refer to \cite[Theorem 1.3]{BDS} for further details. 
\end{recollection}

\begin{notation}\label{big cat geom functors}
    Let $\T$ be a tt-category with small coproducts. For a collection of objects $\mathcal{X}$ in $\T$, we write $\mathrm{Loc}(\mathcal{X})$ and $\mathrm{Loc}^\otimes(\mathcal{X})$ to denote the localizing subcategory and the localizing tensor-ideal generated by $\mathcal{X}$, respectively. 
\end{notation}

\begin{remark}
    Let $f\colon \T\to \T'$ be a coproduct-preserving triangulated functor between tt-categories with small coproducts. Then
$f(\mathrm{Loc}(\mathcal{X}))\subseteq \mathrm{Loc}(f(\mathcal{X}))$.
Moreover, if $f$ is additionally strongly monoidal, then
$f(\mathrm{Loc}^\otimes(\mathcal{X}))\subseteq \mathrm{Loc}^\otimes(f(\mathcal{X}))$.
\end{remark}

For an essentially small tt-category $\K$, we denote its Balmer spectrum by $\mathrm{Spc}(\K)$. Now let $\T$ be a big tt-category. When $\mathrm{Spc}(\T^c)$ is weakly Noetherian, an abstract theory of stratification has been developed in \cite{BHS21}, with \cite{Ste13} and \cite{stevenson2017local} as precursors. Roughly speaking, tt-stratification provides a classification of localizing tensor-ideals in $\T$ by subsets of $\mathrm{Spc}(\T^c)$. We briefly recall the relevant definitions used in this document.

It is important to emphasize, however, that Benson-Iyengar-Krause \cite{BIK11} introduced a different notion of stratification. This approach is more powerful because it simultaneously classifies localizing tensor-ideals in $\T$ and thick tensor-ideals in $\T^c$ in terms of certain subsets of the homogeneous spectrum of a specific graded ring. However, this method requires stronger assumptions on $\T$, and in our setting it is unclear whether these conditions hold. For example, in the case of the derived category of permutation modules over $G$, there is no evident ring that serves as a candidate for BIK-stratification, at least to our knowledge.

From now on, we use `stratification' to refer to the concept in the sense of Barthel-Heard-Sanders, unless specified otherwise.

\begin{recollection} 
Let $\T$ be a big tt-category. The space $\mathrm{Spc}(\T^c)$ is called \textit{weakly Noetherian} if, for any prime $\P$ in $\T^c$, there exist Thomason subsets $Y_1,\, Y_2\subseteq \mathrm{Spc}(\T^c)$, such that $\{\P\}=Y_1\cap Y_2^c$. As expected, if $\mathrm{Spc}(\T^c)$ is Noetherian, then it is weakly Noetherian. This is nothing but asking that the Hochster dual of $\mathrm{Spc}(\T^c)^\vee$ is $T_D$, that is, all its points are locally closed.  For further details, see \cite{BHS21}. 
\end{recollection} 

For the rest of this section, fix a big tt-category $\T$ such that its Balmer spectrum is weakly Noetherian. 

\begin{recollection}
        Let $\mathcal{P}$ be a point in $\mathrm{Spc}(\T^c)$, and consider two Thomason subsets $Y_1,\, Y_2\subseteq \mathrm{Spc}(\T^c)$ such that $\{\mathcal{P}\}=Y_1\cap Y_2^c$. We write $\mathbf{g}(\mathcal{P})$ (or $\mathbf{g}_\T(\mathcal{P})$ if we need to emphasize the role of $\T$) to denote $e(Y_1)\otimes_R f(Y_2)$, where $e(Y_1)$ and $f(Y_2)$ are the idempotents associated to the Thomason subsets $Y_1$ and $Y_2$, respectively. For instance, when $\mathrm{Spc}(\T^c)$ is Noetherian, both subsets $\overline{\{\mathcal{P}\}}$ and 
        $Y_\mathcal{P}=\{\mathcal{Q}\in \mathrm{Spc}(\K(G;R))\mid \mathcal{P}\not\subseteq\mathcal{Q}\}$
        are Thomason and satisfy the property that their intersection is the singleton containing $\mathcal{P}$. In any case, the object $\mathbf{g}(\mathcal{P})$ is independent of the choice of $Y_1$ and $Y_2$. The \textit{Balmer-Favi support} of an object $x$ in $\T$ is defined by
        \[
        \mathrm{Supp}(x)=\{\mathcal{P}\in \mathrm{Spc}(\T^c)\mid \mathbf{g}(\mathcal{P})\otimes x\not=0\}.
        \]
        See \cite{BF11} and \cite{BHS21} for further details.
\end{recollection}

\begin{recollection}
    The category $\T$ is \textit{stratified} if it satisfies the following properties. 
    \begin{enumerate}
        \item \textit{The local-to-global principle:} For any $x$ in $\T$, it holds that
        \[
        x\in\mathrm{Loc}^\otimes( \mathbf{g}(\mathcal{P})\otimes x\mid \mathcal{P}\in \mathrm{Spc}(\T^c) ).
        \]
        \item \textit{Minimality:} For all primes $\mathcal{P}$ in $\T^c$, the localizing ideal $\mathrm{Loc}^\otimes( \mathbf{g}(\mathcal{P}))$ is generated by any nonzero object it contains.
    \end{enumerate}
    By \cite[Theorem A]{BHS21}, this is equivalent to obtaining a bijection
    \[
    \{\textrm{localizing ideals of }\T\}\leftrightarrow \{\textrm{subsets of } \mathrm{Spc}(\T^c)\}
    \]
    via the map $\mathcal{L}\mapsto \bigcup_{x\in \mathcal{L}}\mathrm{Supp}(x)$.
\end{recollection}

\section{Fiberwise building}\label{sec: fiberwise building}
Throughout, we let $\mathcal{I}$ denote a set. Some of the results of this section hold in the generality of (closed) tt-categories with small coproducts in which the tensor product commutes with coproducts in each variable separately, and are therefore of independent interest. Hence, we choose to work in this level of generality and restrict to big tt-categories only when necessary.  

\begin{hypothesis}\label{hyp: tt-cats with small coproducts}
Let $f\colon \D \to \D'$ be a tt-functor between tt-categories with small coproducts. We assume that $f$ preserves coproducts and has a coproduct-preserving adjoint $g\colon \D' \to \D$ satisfying the projection formula: for $x$ in $\D$ and $y$ in $\D'$, we have
\[
g(f(x)\otimes y)\cong x\otimes g(y).
\]
\end{hypothesis}

\begin{remark}
As mentioned earlier, the previous hypothesis is automatic for geometric functors (see Recollection \ref{big cat geom functors}). On the other hand, note that in Hypothesis \ref{hyp: tt-cats with small coproducts}, we do not specify whether $g$ is a left or right adjoint of $f$. This is because, while for geometric functors $g$ is typically the right adjoint of $f$, this is not necessarily the case beyond big tt-categories, as we will see in Section \ref{sec: beyond big}; in those cases, $g$ is the left adjoint of $f$.
\end{remark}

\begin{proposition}\label{coro1}
Let $\{f_i\colon \D\to \D_i\}_{i\in\mathcal{I}}$ be a family of tt-functors between tt-categories with small coproducts satisfying Hypothesis \ref{hyp: tt-cats with small coproducts}, and let $g_i$ denote the corresponding adjoint of $f_i$. If
\[
\mathbb{1}_{\D}\in \mathrm{Loc}^\otimes\left(\coprod_{i\in \mathcal{I}}g_{i}(\mathbb{1}_{\D_i})\right),
\]
then for any $x$ in $\D$ we have 
\[
\mathrm{Loc}^\otimes(x)=\mathrm{Loc}^\otimes\left(\coprod_{i\in \mathcal{I}}g_{i}(f_{i}(x))\right).
\]

\end{proposition}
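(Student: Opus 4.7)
The plan is to prove both inclusions by reducing everything, via the projection formula, to the tensor identity $g_i(f_i(x)) \cong x \otimes g_i(\mathbb{1}_{\D_i})$, and then leveraging the hypothesis on $\mathbb{1}_\D$ by tensoring with $x$.

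First I would establish the fundamental rewriting: applying the projection formula (Hypothesis \ref{hyp: tt-cats with small coproducts}) to $y = \mathbb{1}_{\D_i}$ yields $g_i(f_i(x)) \cong g_i(f_i(x) \otimes \mathbb{1}_{\D_i}) \cong x \otimes g_i(\mathbb{1}_{\D_i})$ for each $i \in \mathcal{I}$. Since the tensor product is assumed to commute with coproducts in each variable, this promotes to the identification
\[
x \otimes \coprod_{i \in \mathcal{I}} g_i(\mathbb{1}_{\D_i}) \;\cong\; \coprod_{i \in \mathcal{I}} \bigl(x \otimes g_i(\mathbb{1}_{\D_i})\bigr) \;\cong\; \coprod_{i \in \mathcal{I}} g_i(f_i(x)).
\]

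For the inclusion $\mathrm{Loc}^\otimes\bigl(\coprod_{i} g_i(f_i(x))\bigr) \subseteq \mathrm{Loc}^\otimes(x)$, I would simply note that each $g_i(f_i(x)) \cong x \otimes g_i(\mathbb{1}_{\D_i})$ lies in $\mathrm{Loc}^\otimes(x)$ because localizing tensor-ideals are closed under tensoring with arbitrary objects; the coproduct then also lies in $\mathrm{Loc}^\otimes(x)$ since localizing subcategories are closed under small coproducts.

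For the reverse inclusion, which is where the hypothesis on the unit enters, I would tensor the assumed containment $\mathbb{1}_\D \in \mathrm{Loc}^\otimes\bigl(\coprod_{i} g_i(\mathbb{1}_{\D_i})\bigr)$ with $x$. Using that the functor $x \otimes (-)$ sends $\mathrm{Loc}^\otimes(\mathcal{X})$ into $\mathrm{Loc}^\otimes(x \otimes \mathcal{X})$ (which follows from the recollection in the preliminaries applied to $x \otimes (-)$, a coproduct-preserving triangulated functor, and then absorbed into the ambient localizing tensor-ideal), we obtain
\[
x \;\cong\; x \otimes \mathbb{1}_\D \;\in\; \mathrm{Loc}^\otimes\!\left(x \otimes \coprod_{i \in \mathcal{I}} g_i(\mathbb{1}_{\D_i})\right) \;=\; \mathrm{Loc}^\otimes\!\left(\coprod_{i \in \mathcal{I}} g_i(f_i(x))\right),
\]
where the last equality uses the identification from the first step. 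Hence $\mathrm{Loc}^\otimes(x) \subseteq \mathrm{Loc}^\otimes\bigl(\coprod_{i} g_i(f_i(x))\bigr)$, completing the proof.

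There is no serious obstacle here: the argument is an essentially formal manipulation, and the only subtlety is ensuring that tensoring with $x$ genuinely carries the hypothesis into the desired localizing tensor-ideal, which is immediate since $g_i(f_i(x))$ absorbs the factor of $x$ via the projection formula. The hypothesis that the adjunction be coproduct-preserving on both sides is used implicitly to guarantee that the coproducts appearing above are well-behaved and that $g_i$ commutes with the constructions involved.
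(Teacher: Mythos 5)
Your proof is correct and takes essentially the same approach as the paper: apply the coproduct-preserving triangulated functor $x \otimes (-)$ to the unit containment, pass to the tensor-ideal it generates, and rewrite via the projection formula $x \otimes g_i(\mathbb{1}_{\D_i}) \cong g_i(f_i(x))$. The paper merely derives both inclusions at once from the single chain $x \in \mathrm{Loc}^\otimes(x \otimes \coprod_i g_i(\mathbb{1}_{\D_i})) \subseteq \mathrm{Loc}^\otimes(x)$, whereas you spell them out as two separate inclusions.
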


\begin{proof}
  First, recall that for a tensor-ideal $\mathcal{X}$ in $\D$, we have that $\mathrm{Loc}^\otimes (\mathcal{X})=\mathrm{Loc}(\mathcal{X})$. In particular, 
  \[
  \mathrm{Loc}^\otimes \left(\coprod_{i\in \mathcal{I}}g_{i}(\mathbb{1}_{\D_i})\right)=\mathrm{Loc}\left(\left\langle \coprod_{i\in \mathcal{I}}g_{i}(\mathbb{1}_{\D_i})\right\rangle^\otimes\right)
  \]
  where $\langle\mathcal{Y}\rangle^\otimes$ denotes the tensor-ideal generated by the set $\mathcal{Y}$. Since $x\otimes -$ is a triangulated functor that commutes with coproducts, we obtain that 
  \[
  x\otimes \mathbb{1}\in  \mathrm{Loc} \left( x \otimes \left\langle \coprod_{i\in \mathcal{I}} g_{i}(\mathbb{1}_{\D_i}) \right\rangle^\otimes \right).  
  \]
   But note that $x\otimes \langle \coprod_{i\in \mathcal{I}} g_{i}(\mathbb{1}_{\D_i}) \rangle^\otimes =\langle x\otimes \coprod_{i\in \mathcal{I}} g_{i}(\mathbb{1}_{\D_i}) \rangle^\otimes  $. Therefore,  $$ x\in  \mathrm{Loc}\left(\left\langle x\otimes \coprod_{i\in \mathcal{I}} g_i(\mathbb{1}_{\D_i})\right\rangle ^\otimes \right)= \mathrm{Loc}^\otimes \left( x\otimes \coprod_{i\in \mathcal{I}} g_{i}(\mathbb{1}_{\D_i}) \right) \subseteq \mathrm{Loc}^\otimes(x). $$
  It follows that $\mathrm{Loc}^\otimes(x\otimes \coprod_{i\in \mathcal{I}} g_{i}(\mathbb{1}_{\D_i}))= \mathrm{Loc}^\otimes(x)$. 
  Moreover, by the projection formula we obtain $x\otimes g_{i}(\mathbb{1}_{\D_i})\cong g_{i}(f_{i}(x))$. Thus $$x\otimes \coprod_{i\in \mathcal{I}} g_{i}(\mathbb{1}_{\D_i})\cong \coprod_{i\in\mathcal{I}}g_{i}(f_{i}(x)).$$ Hence the result follows. 
\end{proof}

\begin{corollary}\label{coro 2}
Let $\{f_i\colon \D\to \D_i\}_{i\in\mathcal{I}}$ be a family of tt-functors between tt-categories with small coproducts satisfying Hypothesis \ref{hyp: tt-cats with small coproducts}, and let  $g_i$ denote the corresponding adjoint of $f_i$. If  $\mathbb{1}_{\D}\in \mathrm{Loc}^\otimes(\coprod_{i\in \mathcal{I}}g_{i}(\mathbb{1}_{\D_i}))$, then for any $x$ and $y$ in $\D$, we obtain the following properties.  
    \begin{enumerate}
        \item \textit{(Detection)} If $f_i(x)=0$ for each $i\in \mathcal{I}$, then $x=0$.
        \item \textit{(Building)} $x\in \mathrm{Loc}^\otimes (y)$ if and only if $f_i(x)\in \mathrm{Loc}^\otimes(f_i(y))$ for all $i\in \mathcal{I}$.   
    \end{enumerate}
\end{corollary}

\begin{proof}
Note that Detection is a particular case of Building. Therefore it is enough to prove Building. The left to right implication is clear since each $f_i$ is assumed to be a tt-functor. We will prove the converse. Since $ f_i(x)\in \mathrm{Loc}^\otimes (f_i(y))=\mathrm{Loc}(f_i(y)\otimes \D_i)$ and $g_i$ commutes with coproducts, we obtain that 
\begin{align*}
g_i(f_i(x))\in \mathrm{Loc}(g_i(f_i(y)\otimes \D_i))= & \mathrm{Loc}(y\otimes g_i(\D_i))\\  \subseteq & \mathrm{Loc}^\otimes(y).     
\end{align*}
 Here we used the projection formula  for the first equality. We deduce that 
 \[
 \mathrm{Loc}^\otimes(g_i(f_i(x)))\subseteq \mathrm{Loc}^\otimes (y), \quad\mbox{ for all } i\in \mathcal{I}.
 \]
  By Proposition~\ref{coro1}, we get
\begin{align*}
    x\in  \mathrm{Loc}^\otimes \left(\coprod_{i\in \mathcal{I}}(g_i(f_i(x)))\right) 
      \subseteq   \mathrm{Loc}^\otimes(y).
\end{align*}
This completes the proof. 
\end{proof}

\begin{remark}
    In fact, we can rephrase the previous result as follows. For a tt-category $\D$ with small coproducts, we let $\mathbf{Loc}^{\otimes}(\D)$ denote the collection of all localizing $\otimes$-ideals of $\D$. In the context of Corollary~\ref{coro 2}, we obtain that 
    \[
   \prod_{i\in \mathcal{I}}\mathbf{Loc}^\otimes(f_i)\colon \mathbf{Loc}^{\otimes}(\D) \to  \prod_{i\in \mathcal{I}}\mathbf{Loc}^{\otimes}(\D_i)
    \]
    is injective.
\end{remark}

\begin{remark}
    If one is interested in colocalizing hom-ideals instead, one could consider the dual of Hypothesis \ref{hyp: tt-cats with small coproducts}, assuming that the adjunctions satisfy \cite[Equation 2.19]{BDS} (e.g., geometric functors), and adapt the proof of Corollary~\ref{coro 2} to obtain an analogous result. We leave the details to the interested reader. 
\end{remark}

\subsection{Consequences for stratification} 

From this point onward, we restrict our discussion to big tt-categories and geometric functors. In order to keep the notation manageable, for a geometric functor $f_i$, we write $g_i$ to denote its right adjoint.

\begin{lemma}\label{descent for local to global}
    Let $\{f_i\colon \D\to \D_i\}_{i\in\mathcal{I}}$ be a family of geometric functors between big tt-categories with weakly Noetherian Balmer spectrum. Assume that 
    \[
    \mathbb{1}_{\D}\in \mathrm{Loc}^\otimes\left(\coprod_{i\in \mathcal{I}}g_{i}(\mathbb{1}_{\D_i})\right).
    \]
    If each $\D_i$ satisfies the local to global principle, then so does $\D$.
\end{lemma}
\begin{proof}
  Set $\varphi_i:=\mathrm{Spc}(f_i)$.  Let $\P$ be a triangular prime in $\D^c$. Recall that $f_i(\mathbf{g}\P)=\mathbf{g}\varphi^{-1}_i(\P)$ for the weakly visible set $\varphi_i^{-1}(\P)$. Now, fix $i\in \mathcal{I}$. We have  
   \begin{align*}
       \mathrm{Loc}^\otimes(f_i(\mathbf{g}\P)\mid \P\in \mathrm{Spc}(\D^c)) & =  \mathrm{Loc}^\otimes(\mathbf{g}\varphi_i^{-1}\P\mid \P\in \mathrm{Spc}(\D^c)) \\ & = \mathrm{Loc}^\otimes(\mathbf{g}\mathcal{Q}\mid \mathcal{Q}\in \mathrm{Spc}(\D_i^c))
   \end{align*}
    where the last equality holds by the local to global principle for $\D_i$ and the fact that any $\mathbf{g}\mathcal{Q}$, for $\mathcal{Q}$ in $\mathrm{Spc}(\D_i^c)$, must contained in the localizing ideal generated by  some $\mathbf{g}\varphi_i^{-1}\mathcal{P}$. It follows that $f_i(\mathbb{1})$ must lie on the left hand side. By building (see Corollary~\ref{coro 2}), we deduce that $\mathbb{1}\in \mathrm{Loc}^\otimes(\mathbf{g}\P\mid \P\in \mathrm{Spc}(\D^c))$. This completes the proof. 
    \end{proof}

\begin{theorem}\label{thm: fiberwise}
 Let $\{f_i\colon \D\to \D_i\}_{i\in\mathcal{I}}$ be a family of geometric functors between big tt-categories. Assume that the following properties hold:
   \begin{enumerate}
       \item $\coprod_{i\in \mathcal{I}}\mathrm{Spc}(f_i)\colon \coprod_{i\in \mathcal{I}}\mathrm{Spc}(\D_i^c)\to \mathrm{Spc}(\D^c)$ is injective. 
       \item $\D_i$ satisfies minimality for each $i\in \mathcal{I}$.
       \item  $\mathbb{1}_{\D}\in \mathrm{Loc}^\otimes(\coprod_{i\in \mathcal{I}}g_i(\mathbb{1}_{\D(i)}))$.
   \end{enumerate}
     Then $\D$ satisfies minimality.
\end{theorem}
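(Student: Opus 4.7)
The plan is to fix a prime $\P\in\mathrm{Spc}(\D^c)$ and a nonzero $x\in\mathrm{Loc}^\otimes(\mathbf{g}(\P))$, and show $\mathbf{g}(\P)\in\mathrm{Loc}^\otimes(x)$ by verifying the Building criterion of Lemma \ref{coro 2} for the family $\{f_i\}_{i\in\mathcal{I}}$ applied to the pair $(\mathbf{g}(\P),x)$. All the data of Hypothesis \ref{hyp: tt-cats with small coproducts} are available in the present context, so Lemma \ref{coro 2} applies to $\D$.

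First I would compute $f_i(\mathbf{g}(\P))$. Since each $f_i$ is a geometric functor, it preserves the smashing idempotents attached to Thomason subsets: $f_i(e(Y))\cong e(\varphi_i^{-1}(Y))$ and $f_i(f(Y))\cong f(\varphi_i^{-1}(Y))$, where $\varphi_i:=\mathrm{Spc}(f_i^c)\colon\mathrm{Spc}(\D_i^c)\to\mathrm{Spc}(\D^c)$. Choosing Thomason $Y_1,Y_2$ with $\{\P\}=Y_1\cap Y_2^c$, hypothesis (1) forces $\varphi_i^{-1}(\{\P\})=\varphi_i^{-1}(Y_1)\cap\varphi_i^{-1}(Y_2)^c$ to be either empty (so $f_i(\mathbf{g}(\P))=0$) or a singleton $\{\mathcal{Q}_i\}$ (so $f_i(\mathbf{g}(\P))=\mathbf{g}_{\D_i}(\mathcal{Q}_i)$, by independence of the choice of Thomason subsets).

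Next I would apply Building. The required condition $f_i(\mathbf{g}(\P))\in\mathrm{Loc}^\otimes(f_i(x))$ is automatic for $i\notin\mathcal{J}:=\{i\,:\,\P\in\mathrm{Im}(\varphi_i)\}$. For $i\in\mathcal{J}$, since $f_i$ is a tt-functor and $x\in\mathrm{Loc}^\otimes(\mathbf{g}(\P))$, we have $f_i(x)\in\mathrm{Loc}^\otimes(\mathbf{g}_{\D_i}(\mathcal{Q}_i))$, and whenever $f_i(x)\neq 0$ the minimality hypothesis (2) on $\D_i$ yields $\mathrm{Loc}^\otimes(f_i(x))=\mathrm{Loc}^\otimes(\mathbf{g}_{\D_i}(\mathcal{Q}_i))$, producing the desired containment.

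The main obstacle is to rule out $f_i(x)=0$ for some $i\in\mathcal{J}$, that is, to show that each restricted functor $f_i|_{\mathrm{Loc}^\otimes(\mathbf{g}(\P))}$ with $i\in\mathcal{J}$ is conservative. The strategy is to work internally in the tensor-ideal $\mathrm{Loc}^\otimes(\mathbf{g}(\P))=\mathbf{g}(\P)\otimes\D$, where every $x$ satisfies $x\cong\mathbf{g}(\P)\otimes x$ and the projection formula is inherited. Tensoring hypothesis (3) with $\mathbf{g}(\P)$ and using the projection formula $\mathbf{g}(\P)\otimes g_i(\mathbb{1}_{\D_i})\cong g_i(f_i(\mathbf{g}(\P)))$ gives
\[
\mathbf{g}(\P)\in\mathrm{Loc}^\otimes\!\left(\coprod\nolimits_{i\in\mathcal{J}}g_i(\mathbf{g}_{\D_i}(\mathcal{Q}_i))\right),
\]
so the restricted family $\{f_i\colon\mathrm{Loc}^\otimes(\mathbf{g}(\P))\to\mathrm{Loc}^\otimes(\mathbf{g}_{\D_i}(\mathcal{Q}_i))\}_{i\in\mathcal{J}}$ meets Hypothesis \ref{hyp: tt-cats with small coproducts} with unit object $\mathbf{g}(\P)$. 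Combining the Detection statement of Lemma \ref{coro 2}(1) in this restricted setting with the minimality of each $\D_i$ should then promote the nonvanishing of some $f_{i_0}(x)$ to the nonvanishing of $f_i(x)$ for every $i\in\mathcal{J}$, completing Building and hence the proof of minimality in $\D$.
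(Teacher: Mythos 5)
Your first two paragraphs faithfully reproduce the paper's own argument: compute $f_i(\mathbf{g}(\P))$ using injectivity of $\varphi_i$ (this is exactly the appeal to \cite[Proposition 3.12]{BHS21} in the paper), and then feed the resulting dichotomy into Building from Lemma \ref{coro 2}, using minimality of the $\D_i$ when $f_i(x)\neq 0$. The problem is the last paragraph. Detection for the restricted family on $\mathbf{g}(\P)\otimes\D$ only tells you that \emph{some} index $i_0\in\mathcal{J}$ has $f_{i_0}(x)\neq 0$; the assertion that this ``promotes'' to $f_i(x)\neq 0$ for \emph{every} $i\in\mathcal{J}$ is not proved, and it does not follow from Detection together with minimality of the $\D_i$ by the ingredients you list. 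Worse, given your earlier reductions this nonvanishing claim is \emph{equivalent} to the minimality of $\mathrm{Loc}^\otimes(\mathbf{g}(\P))$ that you are trying to prove: if some nonzero $x\in\mathrm{Loc}^\otimes(\mathbf{g}(\P))$ had $f_i(x)=0$ for an $i$ with $\varphi_i(\mathcal{Q}_i)=\P$, then $\mathbf{g}(\P)\in\mathrm{Loc}^\otimes(x)$ would give $\mathbf{g}_{\D_i}(\mathcal{Q}_i)\in\mathrm{Loc}^\otimes(f_i(x))=0$, which is absurd since $\mathbf{g}_{\D_i}(\mathcal{Q}_i)\neq 0$. So, as written, the final step of your proposal begs the question, and the proof is incomplete at exactly the point you flagged as ``the main obstacle.''

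For comparison, the paper's proof does not make this detour at all: it asserts directly that $\mathrm{Loc}^\otimes(f_i(x))=\mathrm{Loc}^\otimes(f_i(\mathbf{g}(\P)))$, ``by minimality for $\D_i$'' when $\P=\varphi_i(\mathcal{Q})$, which tacitly presupposes $f_i(x)\neq 0$ in that case, and then concludes by Building — i.e.\ it is your first two paragraphs, with the delicate case left unaddressed rather than resolved. You have therefore correctly isolated the genuinely subtle point, but your proposed fix (restricted Detection plus fiberwise minimality) does not close it; to have a complete argument you would need an actual proof that each $f_i$ with $\P\in\mathrm{im}(\varphi_i)$ kills no nonzero object of $\mathrm{Loc}^\otimes(\mathbf{g}(\P))$, not just that the family does so jointly. (It is worth noting that in the paper's applications the images of the maps $\varphi_i$ are pairwise disjoint — each fiber functor is linear over a different residue field — so at most one index is relevant at any given prime and the issue you raise does not arise there.)
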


 \begin{proof}
        We will  show minimality of $\mathrm{Loc}^\otimes (\mathbf{g}_\D(\mathcal{P}))$ for each prime $\mathcal{P}$ in $\D^c$. First, fix $i$ in $\mathcal{I}$. 
        Since $f_i$ induces an injective morphism  
        \[
        \varphi_i:=\mathrm{Spc}(f_i)\colon\mathrm{Spc}(\D_i^c)\to \mathrm{Spc}(\D^c)
        \]
        we can invoke \cite[Proposition 3.12]{BHS21}, that is, we obtain that  
      \[
      f_i(\mathbf{g}_{\D}(\mathcal{P}))\simeq \left\{
        \begin{array}{ll}
           
            0 & \textrm{ if } \mathcal{P}\not\in \mathrm{im}(\varphi_i) \\
            \mathbf{g}_{\D_i}(\mathcal{Q}) & \textrm{ if } \varphi_i(\mathcal{Q})=\mathcal{P}.
        \end{array}
    \right.
      \]
Let $x\in \mathrm{Loc}^\otimes (\mathbf{g}_{\D}(\mathcal{P}))$.  Note that $\mathrm{Loc}^\otimes (f_i(x))=\mathrm{Loc}^\otimes (f_i(\mathbf{g}_\D(\mathcal{P})))$. Indeed, if $\mathcal{P}\not\in \mathrm{im}(\varphi_i)$, then both localizing subcategories are trivial; if $\varphi_i(\mathcal{Q})=\mathcal{P}$, then the equality follows by minimality for $\D_i$. Our assumption $(3)$ allows to invoke Corollary~\ref{coro 2}. In particular, by building  we deduce that $\mathrm{Loc}^\otimes (x)=\mathrm{Loc}^\otimes (\mathbf{g}_\D(\mathcal{P}))$ as we wanted.
 \end{proof}

 Let us record the following easy but relevant consequence of Theorem \ref{thm: fiberwise} combined with Lemma \ref{descent for local to global}.

 \begin{corollary}\label{coro:fiberwise stratification}
     Let $\{f_i\colon \D\to \D_i\}_{i\in \mathcal{I}}$ be a family of geometric functors between big tt-categories with weakly Noetherian Balmer spectrum. Assume that  the family of functors satisfy the hypotheses of Theorem \ref{thm: fiberwise}. If we additionally assume that each $\D_i$ is stratified, then $\D$ is stratified as well. 
 \end{corollary}

\subsection{A criterion for detecting building}

In this section, we establish a method that will allow us to detect building from categories that already satisfy this property. We first need some preparation. 

\begin{hypotheses}\label{Hypotehes}
 Let $\mathcal{I}$ be a set. For each $i\in \mathcal{I}$, assume that we have a commutative square of big tt-categories and geometric functors 
 \begin{center}
    \begin{tikzcd}
        \D   \arrow[r,"f_i"]  \arrow[d,"\tau"'] &     \D_i \arrow[d,"\tau_i"]   \\ 
       \T \arrow[r,"g_i"'] & \T_i 
    \end{tikzcd}
\end{center}
Let $u_i$ and $v_i$ denote the right adjoints of $f_i$ and $g_i$, respectively. Additionally, we assume that: 
\begin{enumerate}
    \item The above square is \textit{horizontally right adjointable at the monoidal unit}; that is,
    \[
     v_i (\tau_i (\mathbb{1}_{\D_i})) \cong \tau (u_i (\mathbb{1}_{\D_i})).
    \]
    \item The family of functors $\{f_i\colon \D\to \D_i\}_{i \in \mathcal{I}}$ builds the tensor unit in $\D$; explicitly,
    \[
    \mathbb{1}_{\D}\in \mathrm{Loc}^\otimes\left(\coprod_{i\in \mathcal{I}}u_i(\mathbb{1}_{\D_i})\right).
    \]
\end{enumerate}
\end{hypotheses}

\begin{lemma}\label{descending building}
    Under Hypotheses \ref{Hypotehes}, we have
    $$
    \mathbb{1}_{\T}\in \mathrm{Loc}^\otimes\left(\coprod_{i\in \mathcal{I}}v_i(\mathbb{1}_{\T_i})\right).
    $$
\end{lemma}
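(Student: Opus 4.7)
The plan is to simply push the building relation for $\mathbb{1}_\D$ down through $\tau$ to $\T$, using the horizontal adjointability at the monoidal unit to rewrite the generators on the nose.

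First I would apply $\tau$ to hypothesis (2). Since $\tau\colon \D\to \T$ is a coproduct-preserving tt-functor between big tt-categories (in particular strongly monoidal), Recollection \ref{big cat geom functors} gives
$$\tau\bigl(\mathrm{Loc}^\otimes(\coprod_{i\in \mathcal{I}}u_i(\mathbb{1}_{\D_i}))\bigr) \subseteq \mathrm{Loc}^\otimes\bigl(\tau(\coprod_{i\in \mathcal{I}}u_i(\mathbb{1}_{\D_i}))\bigr).$$
Because $\tau$ is strongly monoidal we have $\tau(\mathbb{1}_\D) \cong \mathbb{1}_\T$, and because $\tau$ preserves coproducts we may pull it inside to get $\tau(\coprod_i u_i(\mathbb{1}_{\D_i})) \cong \coprod_i \tau(u_i(\mathbb{1}_{\D_i}))$. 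Combining these yields
$$\mathbb{1}_\T \in \mathrm{Loc}^\otimes\bigl(\coprod_{i\in \mathcal{I}}\tau(u_i(\mathbb{1}_{\D_i}))\bigr).$$

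Next I would use Hypothesis (1) termwise: the base change identity $v_i(\tau_i(\mathbb{1}_{\D_i})) \cong \tau(u_i(\mathbb{1}_{\D_i}))$ rewrites each generator as $v_i(\tau_i(\mathbb{1}_{\D_i}))$. Since $\tau_i$ is itself a tt-functor, $\tau_i(\mathbb{1}_{\D_i}) \cong \mathbb{1}_{\T_i}$, so $\tau(u_i(\mathbb{1}_{\D_i})) \cong v_i(\mathbb{1}_{\T_i})$. Substituting gives
$$\mathbb{1}_\T \in \mathrm{Loc}^\otimes\bigl(\coprod_{i\in \mathcal{I}}v_i(\mathbb{1}_{\T_i})\bigr),$$
which is the desired conclusion.

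The proof is essentially a one-line diagram chase, so there is no real obstacle: the content is entirely in the setup. The only point that requires a moment of care is checking that the functorial inclusion of localizing tensor-ideals from Recollection \ref{big cat geom functors} is applicable here, for which one uses that $\tau$ is both coproduct-preserving and strongly monoidal; after that, the horizontal adjointability hypothesis and the fact that $\tau$, $\tau_i$ both send $\mathbb{1}$ to $\mathbb{1}$ do all of the remaining work.
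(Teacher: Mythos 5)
Your proof is correct and is essentially identical to the paper's argument: apply the strongly monoidal, coproduct-preserving functor $\tau$ to Hypothesis (2) using the inclusion $\tau(\mathrm{Loc}^\otimes(\mathcal{X}))\subseteq \mathrm{Loc}^\otimes(\tau(\mathcal{X}))$, then rewrite each generator via the horizontal adjointability isomorphism $\tau(u_i(\mathbb{1}_{\D_i}))\cong v_i(\tau_i(\mathbb{1}_{\D_i}))\cong v_i(\mathbb{1}_{\T_i})$. No gaps to report.
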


\begin{proof}
   Since $\tau$ is a tt-functor, condition $(2)$ in Hypotheses \ref{Hypotehes} implies that 
\begin{align*}
     \mathbb{1}_{\T}=\tau(\mathbb{1}_{\D})
     &\in \mathrm{Loc}^\otimes \left(\tau\left(\coprod_{i\in \mathcal{I}}u_i(\mathbb{1}_{\D_i})\right)\right) \\
     &= \mathrm{Loc}^\otimes \left(\coprod_{i\in \mathcal{I}}\tau\bigl(u_i(\mathbb{1}_{\D_i})\bigr)\right) \\
     &= \mathrm{Loc}^\otimes \left(\coprod_{i\in \mathcal{I}} v_i(\mathbb{1}_{\T_i})\right),
\end{align*}
where the last equality follows from Hypotheses \ref{Hypotehes}(1) together with the assumption that each $\tau_i$ is a tt-functor.
\end{proof}

\section{New examples, the derived category of permutation modules}

In this section, we apply the methods developed previously to show that the big derived category of permutation modules over an arbitrary Noetherian base is stratified. It is worth emphasizing that stratification in the field case was established by Balmer and Gallauer (see \cite{BG25}). The key ingredients in our argument are Neeman’s stratification theorem for the derived category of a Noetherian ring (see \cite{Nee}), together with the field-case result.

Our approach is elementary; however, the main subtlety lies in proving that the Balmer spectrum is a weakly Noetherian space. In fact, we will show that it is Noetherian. For the moment, we take this as given and defer the proof to Section \ref{sec:Noetherianity}.

For the remainder of this section, let $R$ be a commutative Noetherian ring and $G$ a finite group.

\begin{recollection}
 A \textit{permutation $RG$--module} is an $RG$-module which has a $G$--equivariant $R$--basis. In other words, permutation  $RG$--modules are $R$--linearization of $G$--sets. We write $\Perm(G;R)$ to denote the full subcategory of $\Mod(RG)$ on permutation $RG$--modules, and let $\perm(G;R)$  denote the full subcategory on finitely generated permutation $RG$--modules.  The category $\Perm(G;R)$ is a tensor category with the tensor product given by $\otimes_R$ endowed with the diagonal action of $G$.
\end{recollection}

\begin{recollection}\label{def:big derived category}
    The \textit{small derived category of permutation $RG$--modules} $\K(G;R)$ is defined as  the idempotent completion of bounded homotopy category of finitely generated permutation $RG$--modules. In symbols:
    \[
    \K(G;R):=\mathbf{K}^b(\perm(G;R))^\natural.
    \] 
    The  \textit{big derived category of permutation $RG$--modules} $\T(G;R)$ is defined as the  localizing subcategory of $\mathbf{K}(\Perm(G;R))$ generated by $\K(G;R)$.  The tensor product on $\Perm(G;R)$ commutes with coproducts and allows us to endow $\T(G;R)$ with a tensor structure. In fact, 
the big derived category of permutation $RG$--modules $\T(G;R)$ is a rigidly-compactly generated tensor triangulated category, and $\K(G;R)$ sits inside $\T(G;R)$ as the rigid-compact part. We refer to \cite[Section 3]{BG21} for further details.  
\end{recollection}

\begin{remark}\label{alternative definition for T}
    There is an alternative definition of the big derived category of permutation $RG$--modules. Namely, it is defined as the localization 
    \[
    \T(G;R):=\mathbf{K}(\Perm(G;R))[\{G\textrm{--quasi-iso}\}^{-1}]
    \]
    where a morphism $f$ is a $G$--quasi-isomorphism if the induced morphism on $H$-fixed points $f^H$ is a quasi-isomorphism for all $H\leq G$ (see \cite[Definition 3.6]{BG21}).   
\end{remark}

\begin{remark}\label{Db is a localization of KG}
 Let us stress that the innocent-looking derived category of permutation $RG$--modules is in fact quite big; it has as a localization Krause's homotopy category of injective $RG$--modules, at least when $R$ is regular.  More precisely,  Balmer and Gallauer \cite[Section 4]{BG22b} showed that there is a localization functor 
 \[
 \Upsilon^G\colon\T(G;R)\twoheadrightarrow \mathbf{K}^{\textrm{perm}}(\mathrm{Inj}(RG))
 \]
 where $\mathbf{K}^{\textrm{perm}}(\mathrm{Inj}(RG))$ is the localizing subcategory of $\mathbf{K}(\mathrm{Inj}(RG))$ generated by $Q_\rho(R(G/H))$, $H\leq G$. Here $Q_\rho$ denotes the right adjoint of the localization functor $Q\colon \mathbf{K}(\mathrm{Inj}(RG))\to \mathbf{D}(RG)$ which inverts quasi-isomorphisms. In particular, $\mathbf{K}^{\textrm{perm}}(\mathrm{Inj}(RG))$ agrees with $\mathbf{K}(\mathrm{Inj}(RG))$ when $R$ is regular.

Moreover, the previous localization  restricts to a localization on compact objects
    \[
    \K(G;R)\twoheadrightarrow \mathbf{D}^{\textrm{perm}}(RG)
    \]
      where $\mathbf{D}^{\textrm{perm}}(RG)$ denotes the thick subcategory of $\mathbf{D}^b(RG)$ generated by $R(G/H)$, $H\leq G$. This has an immediate consequence on Balmer spectra, namely this localization induces an injective map 
      \[
      \mathrm{Spc}(\mathbf{D}^{\textrm{perm}}(RG))\hookrightarrow \mathrm{Spc}(\K(G;R)).
      \]
      In particular,  this map is open when $R$ is a field in modular characteristic; see \cite[Proposition 3.22]{BG25} and the references therein.  
\end{remark}


Our strategy is to construct families of functors relating $\T(G;R)$ and the unbounded derived category of $R$, $\mathbf{D}(R)$, that satisfy Hypotheses \ref{Hypotehes}. As noted above, we will make use of Neeman’s stratification result for $\mathbf{D}(R)$; for the reader’s convenience, we recall the statement here. See \cite{Nee} for further details.

\begin{theorem}[Neeman]\label{thm of neeman}
Let $R$ be a commutative Noetherian ring. For each $\pp\in \mathrm{Spec}(R)$, consider the geometric tt-functor $f_{\pp}\colon \mathbf{D}(R)\to \mathbf{D}(k(\pp))$ obtained by base change along the residue field $R\to k(\pp)$ at $\pp$. Let $u_{\pp}$ denote the right adjoint of $f_{\pp}$. Then 
\[
R=\mathbb{1}_{\mathbf{D}(R)}\in \mathrm{Loc} \left( \coprod_{\pp\in \mathrm{Spec}(R)} u_{\pp}( \mathbb{1}_{\mathbf{D}(k(\pp))} ) \right).
\]    
\end{theorem}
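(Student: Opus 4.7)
I would begin by unwinding $u_{k(\pp)}(\mathbb{1}_{\mathbf{D}(k(\pp))})$: since $f_{k(\pp)}$ is the base change $-\otimes^{L}_{R}k(\pp)$, its right adjoint $u_{k(\pp)}$ is restriction of scalars along $R\to k(\pp)$, so $u_{k(\pp)}(\mathbb{1}_{\mathbf{D}(k(\pp))})\simeq k(\pp)$ as an object of $\mathbf{D}(R)$. The claim then reduces to showing
\[
R\in\mathcal{L}:=\mathrm{Loc}\Bigl(\coprod_{\pp\in\mathrm{Spec}(R)}k(\pp)\Bigr)\subseteq\mathbf{D}(R).
\]

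My strategy would be the standard Bousfield argument: form the fibre triangle $\Gamma R\to R\to LR$ associated to $\mathcal{L}$, with $\Gamma R\in\mathcal{L}$ and $LR\in\mathcal{L}^{\perp}$, and prove $LR=0$. First I would observe that $\mathcal{L}$ is automatically a tensor-ideal, since for any $M\in\mathbf{D}(R)$ the object $M\otimes^{L}_{R}k(\pp)$ is a complex of $k(\pp)$-vector spaces, hence splits as a coproduct of shifts of $k(\pp)$ and lies in $\mathrm{Loc}(k(\pp))\subseteq\mathcal{L}$. This guarantees the Bousfield triangle exists in the desired form. From $k(\pp)[n]\in\mathcal{L}$ and $LR\in\mathcal{L}^{\perp}$ one then reads off that $\mathrm{RHom}_{R}(k(\pp),LR)=0$ for every $\pp\in\mathrm{Spec}(R)$.

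The remaining step---upgrading this Ext-vanishing to $LR=0$---is where I expect the main technical obstacle to lie, since it is a detection principle specific to commutative Noetherian rings. I would establish it by Noetherian induction on the support of the cohomology of $LR$: if $LR\ne 0$, pick a prime $\pp$ associated to some nontrivial $H^{n}(LR)$, producing an injection $R/\pp\hookrightarrow H^{n}(LR)$; then localize at $\pp$ and use the Matlis--Gabriel classification of indecomposable injectives $E_{R_{\pp}}(k(\pp))$ over the local Noetherian ring $R_{\pp}$, together with the structure of minimal injective resolutions, to extract a nonzero class in $\mathrm{Ext}^{*}_{R}(k(\pp),LR)$, contradicting the assumed Ext-vanishing. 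The Noetherianity of $R$---through the existence of associated primes and the Matlis--Gabriel structure theory for injectives---is essential here and is the only non-formal ingredient of the argument; everything else follows from the abstract tensor-triangular formalism.
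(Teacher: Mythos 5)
A preliminary remark on the comparison: the paper does not prove this statement at all---it is quoted as Neeman's theorem and referred to \cite{Nee}---so the benchmark is the standard argument in the literature. Your formal reduction is correct and matches that standard setup: $u_{k(\pp)}(\mathbb{1})\simeq k(\pp)$ viewed as an object of $\mathbf{D}(R)$ (the right adjoint of derived base change is restriction of scalars), the Bousfield triangle $\Gamma R\to R\to LR$ exists because $\mathcal{L}$ is generated by a set of objects in the compactly generated category $\mathbf{D}(R)$ (the tensor-ideal observation is true but not needed for this), and the theorem is thereby equivalent to the detection statement: if $\mathrm{RHom}_R(k(\pp),X)=0$ for all $\pp$, then $X=0$. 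So, as you say yourself, all of the content sits in that last step.

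That step, as sketched, rests on a false implication. You propose to take an associated prime $\pp$ of some nonzero $H^n(LR)$ and extract a nonzero class in $\mathrm{Ext}^\ast_R(k(\pp),LR)$. But associated primes are support-type data, compatible with localization, whereas $\mathrm{RHom}_R(k(\pp),-)$ is cosupport-type data: it factors through the colocalization $\mathrm{RHom}_R(R_\pp,-)$, not through $(-)_\pp$, and the two genuinely diverge, already for modules. Concretely, over $R=\mathbb{Z}$ take $X=\mathbb{Z}_p$, the $p$-adic integers: its unique associated prime is $(0)$, yet $\mathrm{RHom}_{\mathbb{Z}}(\mathbb{Q},\mathbb{Z}_p)=0$, since $\mathrm{Hom}(\mathbb{Q},\mathbb{Z}_p)=0$ ($\mathbb{Z}_p$ has no divisible subgroup) and $\mathrm{Ext}^1(\mathbb{Q},\mathbb{Z}_p)=0$ ($\mathbb{Z}_p$ is algebraically compact, hence cotorsion). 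So the prime your procedure selects need not yield the desired nonvanishing, and Matlis--Gabriel theory over $R_\pp$ cannot repair this, because localizing $LR$ at $\pp$ changes the relevant $\mathrm{RHom}$ (e.g.\ $(\mathbb{Z}_p)_{(0)}=\mathbb{Q}_p\neq 0$ while $\mathrm{RHom}_\mathbb{Z}(\mathbb{Q},\mathbb{Z}_p)=0$). The standard way to close the gap---essentially Neeman's argument, also used by Benson--Iyengar--Krause---is a Noetherian induction over primes showing $\mathrm{RHom}_R(R/\qq,LR)=0$ for every prime $\qq$: for $x\notin\qq$, the module $R/(\qq+(x))$ has a finite filtration with subquotients $R/\qq'$ with $\qq'\supsetneq\qq$, so the inductive hypothesis makes $x$ act invertibly on $\mathrm{RHom}_R(R/\qq,LR)$; hence this complex has $k(\qq)$-local cohomology, and its vanishing follows from $\mathrm{RHom}_{R/\qq}(k(\qq),\mathrm{RHom}_R(R/\qq,LR))\simeq\mathrm{RHom}_R(k(\qq),LR)=0$. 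Since $R$ itself admits a finite filtration with subquotients of this form (as does any finitely generated module), one concludes $LR=0$. With that replacement for your associated-primes step, your outline becomes a complete proof.
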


\begin{recollection}\label{rec:comm diagram for T}
  Consider the functor 
  \[
  \mathrm{Infl}_1^G\colon\mathbf{D}^\mathrm{perf}(R) \to \K(G;R)
  \]
  induced by the augmentation map $RG\to R$. We can also think about this map as the inflation functor induced by the morphism of groups $G\to G/G$ which explains the notation. One verifies that this functor is fully faithful and tensor triangulated. Hence it induces a fully faithful tt-functor 
  \[
  \mathrm{Infl}_1^G\colon\mathbf{D}(R)\to \T(G;R).
  \]
    On the other hand, for $\pp\in \mathrm{Spec}(R)$, it is not too hard to verify that the functor $k(\pp)\otimes_R-$ induced by base along the residue field $R\to k(\pp)$ takes permutation $RG$--modules into permutation $k(\pp)G$--modules. In particular, we obtain a geometric functor $$g_\pp\colon \T(G;R)\to \T(G;k(\pp)).$$ We will write $g_{\pp,G}$ if we need to emphasize the role of the group $G$. Consider the following diagram where each pair of arrows is an adjunction following the convention that the top arrow from the pair is the left adjoint. 
    \begin{equation}\label{commutative diagram for T}
    \begin{tikzcd}
        \mathbf{D}(R)   \arrow[r,"u_\pp" below, <-,shift left=0] \arrow[r,"f_\pp",->,shift left=2]   \arrow[d, "\mathrm{Infl}" left, ->,shift left=0] \arrow[d, "\mathrm{\varphi_R^G}", <-,shift left=2]  &     \mathbf{D}(k(\pp)) \arrow[d, "\mathrm{Infl}" left, ->,shift left=0] \arrow[d, "\mathrm{\varphi_\pp^G}", <-,shift left=2]   \\ 
       \T(G;R)    \arrow[r,"v_{\pp,G}" below, <-,shift left=0] \arrow[r,"g_{\pp.G}",->,shift left=2] & \T(G;k(\pp)). 
    \end{tikzcd}
  \end{equation}
 here $f_\pp$ is short for $k(\pp)\otimes_R^L-$ which is indeed a geometric functor. In particular, we identify $g_{\pp,1}$ and $v_{\pp,1}$ with $f_\pp$ and $u_\pp$, respectively.  
 Moreover, we have that   $\mathrm{Infl}_1^G$ is compatible with $g_{\pp,G}$ in the sense that the previous diagram  is commutative from the top left to the bottom right.
\end{recollection}

\begin{proposition}\label{prop:adjointable square for T}
    In the context of Recollection \ref{rec:comm diagram for T}, we have that   
    \[
    \mathrm{Infl} (u_\pp (\mathbb{1}_{\mathbf{D}(k(\pp))}))\simeq v_{\pp,G}(\mathrm{Infl}(\mathbb{1}_{\mathbf{D}(k(\pp))}))
    \]
    for each $\pp\in \mathrm{Spec}(R)$.
\end{proposition}

\begin{proof}
Fix $\pp\in \mathrm{Spec}(R)$. In order to ease notation, we write $k$ to denote $k(\pp)$, and drop $\pp$ from the notation involving the  functors from Recollection \ref{rec:comm diagram for T}. Since the functor $\mathrm{Infl}$ is fully faithful, it is enough to show that 
\[
\eta\colon \mathrm{Infl}(\varphi_R^G(v_G(k)))\to v_G(k)
\]
is an equivalence, where $\eta$ denotes the  
counit of the adjunction $(\mathrm{Infl},\varphi_R^G)$ evaluated at $v_G(k)$. 

In view of  Remark \ref{alternative definition for T}, it is enough to verify that $\eta$ is a $G$--quasi-isomorphisms, that is, the induced map on  $H$-fixed points $\eta^H$ is a quasi-isomorphism for all $H\leq G$. First, note that the right adjoint of the inflation functor $\mathrm{Infl}_1^G\colon \mathbf{D}(R)\to \T(G;R)$ is given by taking ordinary $G$-fixed points degree-wise $(-)^G\colon \T(G;R)\to \T(1;R)$, and then inverting quasi-isomorphisms $\T(1;R)\to \mathbf{D}(R)$. In other words, $\eta^G$ is a quasi-isomorphism if $\varphi_R^G(\eta)$ is an isomorphism. 
Hence, $\eta$ would be a $G$--quasi-isomorphism if its restriction to $\T(H;R)$ is an isomorphism in $\mathbf{D}(R)$ under $\varphi_R^H$ for each subgroup $H$ of $G$.

Note that for any subgroup $H\leq G$, we have an analogous commutative square as in \ref{commutative diagram for T}. In particular, get an isomorphism  
\begin{equation}
    \label{eq-1}
    u(k)\simeq \varphi_R^H( v_H(k)).
\end{equation}
Now, fix a subgroup $H$ of $G$. Consider the following diagram 
  \begin{center}
    \begin{tikzcd}
 \T(G;R)   \arrow[d, "\mathrm{ind}" left, <-,shift left=0] \arrow[d, "\mathrm{res}", ->,shift left=2] \arrow[r,"v_G" below, <-,shift left=0] \arrow[r,"g_G",->,shift left=2] & \T(G;k)   \arrow[d, "\mathrm{ind}" left, <-,shift left=0] \arrow[d, "\mathrm{res}", ->,shift left=2] \\  \T(H;R) \arrow[r,"v_{H}" below, <-,shift left=0] \arrow[r,"g_{H}",->,shift left=2] &  \T(H;k)
    \end{tikzcd}
\end{center} 
 Note that the left adjoints from the left-bottom corner to the right-top corner commute. Indeed, one can check this on compact objects using that $\mathrm{ind}_H^G\mathrm{ind}_L^H(x)\cong \mathrm{ind}_L^G(x)$ for any subgroup $L$ of $H$. It follows that the right adjoint functors in the opposite direction also commute. Thus, we have a natural isomorphism
 \begin{equation}
     \label{eq-2}
     \mathrm{res}^G_H\circ v_G \simeq v_{H}\circ \mathrm{res}^G_H.
 \end{equation}
  Moreover, note that for any subgroup $H\leq G$, we have an identification $\mathrm{Infl}_1^H\simeq \mathrm{res}^G_H\circ \mathrm{Infl}_1^G$. By this observation,  we have that 
  \[  \mathrm{res}^G_H\mathrm{Infl}_1^G(\varphi_R^Gv_G(k))\simeq \mathrm{Infl}_1^H(\varphi_R^Gv_G(k)))\simeq \mathrm{Infl}_1^H(\varphi_R^Hv_H(k))
  \]
  where the equivalence from the right is obtained from Equation \ref{eq-1}; indeed note that $\varphi^H_R(v_H(k))\simeq u(k)\simeq  \varphi_R^G(v_G(k))$. On the other hand, by Equation \ref{eq-2}, we obtain that $\mathrm{res}^G_H(v_G(k))\simeq v_H(k)$. Using these equivalences, we can identify $\mathrm{res}^G_H(\eta)$ with the counit of the adjunction $(\mathrm{Infl}_1^H,\varphi^H_R)$ evaluated at $v_H(k)$. In particular, it follows that $\varphi^H_R(\mathrm{res^G_H(\eta)})$ is an isomorphism as we wanted. 
\end{proof}

\begin{theorem}\label{cor:joitly conservaty for T}
    Let $G$ be a finite group and $R$ be a commutative Noetherian ring. Consider the same notation as in Recollection \ref{rec:comm diagram for T}. Then 
    \[
    R\in \mathrm{Loc}^\otimes\left( \coprod_{\pp\in \mathrm{Spec}(R)} u_\pp(k(\pp)) \right)\subseteq \T(G;R).
    \]
    In particular, the family of functors 
    \[
    \{g_\pp\colon \T(G;R)\to \T(G;k(\pp))\}_{\pp\in\mathrm{Spec}(R)}
    \]
    is jointly conservative. 
\end{theorem}

\begin{proof}
    The first claim follows by Lemma \ref{descending building} applied to   Proposition \ref{prop:adjointable square for T}. The second claim follows by Corollary~\ref{coro 2}. 
\end{proof}

Our main application is the following.  

\begin{theorem}\label{thm: stratification for T}
    Let $R$ be a commutative Noetherian ring and $G$ be a finite group. Then the big derived category of permutation $RG$--modules $\T(G;R)$ is stratified. In particular, it satisfies the Telescope Property. 
\end{theorem}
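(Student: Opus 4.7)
The plan is to apply Corollary \ref{coro:fiberwise stratification} to the family of geometric functors $\{g_\pp\colon \T(G,R)\to \T(G,k(\pp))\}_{\pp\in\mathrm{Spec}(R)}$ obtained by base change along the residue fields of $R$. This will yield stratification of $\T(G,R)$ once I verify: (i) the three hypotheses of Theorem \ref{thm: fiberwise}, (ii) that $\mathrm{Spc}(\T(G,R)^c)=\mathrm{Spc}(\K(G,R))$ is weakly Noetherian, and (iii) the local-to-global principle.

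For (i), the building condition $\mathbb{1}_{\T(G,R)}\in \mathrm{Loc}^\otimes(\coprod_{\pp}v_\pp(\mathbb{1}_{\T(G,k(\pp))}))$ was already established during the proof of Corollary \ref{cor:joitly conservaty for T}, combining Proposition \ref{prop:adjointable square for T}, Lemma \ref{descending building}, and Neeman's Theorem \ref{thm of neeman}. Minimality of $\T(G,k(\pp))$ for each residue field is the Balmer-Gallauer stratification result \cite{BG23b}. For the injection on Balmer spectra, I would note that $g_\pp$ restricted to rigid-compact objects sends the generators $R[G/H]$ of $\K(G,R)$ to the corresponding generators $k(\pp)[G/H]$ of $\K(G,k(\pp))$, hence is essentially surjective up to direct summands; by Remark \ref{rem:injection on spectra} (that is, \cite[Corollary 3.14]{Bal05}), this yields the desired injectivity $\mathrm{Spc}(g_\pp)\colon\mathrm{Spc}(\K(G,k(\pp)))\hookrightarrow \mathrm{Spc}(\K(G,R))$.

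The main obstacle is (ii): proving that $\mathrm{Spc}(\K(G,R))$ is Noetherian. As signalled at the start of this section, this is postponed to Section \ref{sec:noetherianity}, where it is handled via the triangular fixed-point maps described in the introduction, reducing to the already-understood field case. Once Noetherianity is in hand, the local-to-global principle in (iii) follows automatically from \cite[Theorem 3.22]{BHS21}, so Corollary \ref{coro:fiberwise stratification} applies and delivers stratification.

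Finally, the Telescope Property is a formal consequence of stratification for a big tt-category with weakly Noetherian Balmer spectrum: every smashing localization of $\T(G,R)$ corresponds under the stratification bijection to a Thomason subset of $\mathrm{Spc}(\K(G,R))$ and is therefore generated by its compact part (see \cite{BHS21}). Thus the bulk of the work is concentrated in the Noetherianity statement proved in Section \ref{sec:noetherianity}; the current theorem is then an assembly of that result with Corollary \ref{cor:joitly conservaty for T} and the Balmer-Gallauer theorem via the fiberwise criterion of Corollary \ref{coro:fiberwise stratification}.
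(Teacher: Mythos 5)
Your assembly is the same as the paper's: the building condition via Proposition \ref{prop:adjointable square for T}, Lemma \ref{descending building} and Theorem \ref{thm of neeman}; minimality of the fibers from \cite{BG23b}; Noetherianity of $\mathrm{Spc}(\K(G,R))$ postponed to Section \ref{sec:noetherianity}; the local-to-global principle from Noetherianity; then Corollary \ref{coro:fiberwise stratification} and the standard Telescope consequence. The gap is in your verification of condition (1) of Theorem \ref{thm: fiberwise}. You claim that $g_\pp$ on compacts is essentially surjective up to direct summands because it sends the generators $R[G/H]$ to the generators $k(\pp)[G/H]$. That inference is invalid: \cite[Corollary 3.14]{Bal05} requires every object of $\K(G,k(\pp))$ to be, up to isomorphism, a direct summand of an object in the \emph{literal image} of the functor, whereas ``generators go to generators'' only says that the image generates the target as a thick subcategory; thick closure involves cones of morphisms of $\K(G,k(\pp))$, and such morphisms need not lift along $g_\pp$. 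Concretely, $\Hom_{k(\pp)G}(k(\pp)[G/H],k(\pp)[G/K])$ has the double-coset basis with coefficients in $k(\pp)$, and these coefficients (e.g.\ $1/2$ for $R=\Z$, $\pp=(0)$, $k(\pp)=\Q$) do not lift to $R$, so a bounded complex of permutation $k(\pp)G$--modules need not be, even up to summands, the base change of a complex over $R$. If generation of the target sufficed, essentially every tt-functor would induce an injection on spectra, which is false.

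The paper closes exactly this point with a two-step factorization $R\to R_\pp\to k(\pp)$. For the surjective map $R_\pp\to k(\pp)$ it uses that base change $\perm(G,R_\pp)\to\perm(G,k(\pp))$ is \emph{full} and essentially surjective up to summands (fullness is what allows lifting of complexes), so the induced functor on $\mathbf{K}^b$ is essentially surjective up to direct summands and \cite[Corollary 3.14]{Bal05} applies. For the localization $R\to R_\pp$, where fullness fails and clearing denominators does not obviously produce an honest complex over $R$, the paper gives a different argument: using the $R$-linearity of $\T(G,R)$ it identifies $\lambda_r\circ\lambda$ with the $\pp$-localization $x\mapsto x_\pp$, arguing on Hom-modules as in \cite[Lemma 8.2]{BBIKP}, and from this deduces injectivity of $\mathrm{Spc}(\lambda)$; it also disposes separately of the trivial case where the characteristic of $k(\pp)$ does not divide $|G|$, since there the target spectrum is a point. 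You need to supply this (or an equivalent) argument for the injectivity of $\mathrm{Spc}(g_\pp)$; the remainder of your proof coincides with the paper's.
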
 

\begin{proof}
In Corollary  \ref{cover of spc} we  show that the Balmer spectrum of $\K(G;R)$ is Noetherian.  By \cite{BG25} we have that $\T(G;k(\pp))$ is stratified for every residue field $k(\pp)$ of $R$.  That the family of functors  
\[
\{g_\pp\colon \T(G;R)\to \T(G;k(\pp))\}_{\pp\in\mathrm{Spec}(R)}
\]
satisfies condition $(3)$ of Theorem \ref{thm: fiberwise} is established in Theorem \ref{cor:joitly conservaty for T}. The map $\coprod_{\pp\in\mathrm{Spec}(R)}\mathrm{Spc}(g_\pp)$ is a bijection by Theorem \ref{thm-setpartition-general}. We now have all the ingredients required to invoke Corollary \ref{coro:fiberwise stratification} which gives us stratification. 

 The second claim is a well-known consequence of stratification; see \cite{BHS21}.
 \end{proof}

\section{Old examples, the category of representations of a finite group scheme}\label{sec:group schemes}

In this section, we give an alternative proof of a recent result from \cite{BBIKP} on stratification for the category of representations of a finite group scheme over a commutative Noetherian ring. We emphasize that, in \textit{loc.\ cit.}, the authors establish BIK-stratification (see \cite{BIK11}), which is stronger than the notion of stratification considered in this work. In particular, BIK-stratification yields a description of the Balmer spectrum of the compact objects in the associated big tt-category. Nevertheless, combining the main results of this section with \cite{BCHS23}, we recover a description of the Balmer spectrum without appealing to BIK-stratification.

Our strategy follows that of the previous section. The key ingredients are Neeman’s theorem together with stratification for the category of representations in the field case. For the reader’s convenience, we recall the relevant definitions here.

\begin{recollection}
Let $R$ be a commutative Noetherian ring and $G$ be a finite group scheme over $R$. Recall that the group algebra $RG$  is the dual of the coordinate algebra $R[G]$. We write $\mathrm{Mod}(G;R)$ to denote the full subcategory of $RG$--modules whose objects are lattices, that is, $RG$--modules that are projective as $R$--modules. We will consider $\mathrm{Mod}(G;R)$ as an exact category with the  exact structure determined by the split exact structure of $\mathrm{Proj}(R)$. We will also endow $\mathrm{Mod}(G;R)$ with the symmetric monoidal structure induced by $\otimes_R$ equipped with diagonal $G$--action.
Let $\mathrm{mod}(G;R)$ denote the full subcategory of $\mathrm{Mod}(G;R)$ consisting of finitely generated lattices. The \textit{category of $RG$--representations}, $\mathrm{Rep}(G;R)$, is defined as (the homotopy category of) the ind-completion of the derived $\infty$-category $\mathbf{D}^b(\mathrm{mod}(G;R))$ of finitely generated $RG$--lattices $\mathrm{mod}(G;R)$. In symbols, 
\[
\mathrm{Rep}(G;R):=\mathrm{ind}\, \mathbf{D}^b(\mathrm{mod}(G;R)).
\]
In fact, $\mathrm{Rep}(G;R)$ is a big tt-category with the monoidal structure induced by $\otimes_R$ and the monoidal unit corresponds to $\mathbf{i}_GR$, the injective resolution of $R$ in $\mathrm{Mod}(G;R)$. In particular: 
\begin{itemize}
    \item When $G=1$, this construction recovers recovers the big derived category $\mathbf{D}(R)$ of $R$.
    \item When $R$ is a field, it recovers $\mathbf{K}(\mathrm{Proj}(kG))$.
\end{itemize}
\noindent For further details, we refer to \cite[Section 5]{BBIKP}.
\end{recollection}

\begin{recollection}\label{forgetful functor}
 Recall that the augmentation map $RG\to R$ induces a restriction functor $\mathrm{proj}(R)\to \mathrm{mod}(G;R)$ which is strongly monoidal since the monoidal structure on the latter is given by $\otimes_R$. In particular, we obtain a tensor triangulated functor 
 \[
 \mathrm{Infl}\colon \mathbf{D}^\textrm{perf}(R) \to \mathbf{D}^b(\mathrm{mod}(G;R)).
 \]
 Since ind-completion is a monoidal functor, we obtain a  tt-functor 
 \[
 \mathrm{Infl}\colon\mathbf{D}(R)\to \mathrm{Rep}(G;R)
 \]
 which commutes with coproducts.

 We will also consider  the functor $\mathbf{q}\colon \mathrm{Rep}(G;R)\to \mathbf{D}(RG)$ that inverts quasi-isomorphisms. In fact, this functor admits a right adjoint $\mathbf{i}_G\colon \mathbf{D}(RG)\to \mathrm{Rep}(G;R)$. In particular, $\mathbf{i}_GR$ is  the injective resolution of $R$ in $\mathrm{Mod}(G;R)$ which justify  the notation for this right adjoint $\mathbf{i}_G$.
\end{recollection}

\begin{recollection}\label{recollection:commutative diagram for Rep}
    For each $\pp$ in  $ \mathrm{Spec}(R)$, base change along the residue field $R\to k(\pp)$ induces a geometric tt-functor $$g_\pp\colon \mathrm{Rep}(G;R)\to \mathrm{Rep}(G_{k(\pp)};k(\pp))$$ where $G_{k(\pp)}$ denotes the group scheme obtained from $G$ by base change along $R\to k(\pp)$. Moreover, combining $g_\pp$ with the functors $\mathrm{Infl}$  from Recollection  \ref{forgetful functor}  we obtain a commutative diagram 
    \begin{center}
    \begin{tikzcd}
        \mathbf{D}(R)   \arrow[r,"u_\pp" below, <-,shift left=0] \arrow[r,"f_\pp",->,shift left=2]    \arrow[d,"\mathrm{Infl}"'] &     \mathbf{D}(k(\pp)) \arrow[d,"\mathrm{Infl}"]   \\ 
       \mathrm{Rep}(G;R) \arrow[r,"v_\pp" below, <-,shift left=0] \arrow[r,"g_\pp",->,shift left=2]   & \mathrm{Rep}(G_{k(\pp)};k(\pp)) 
    \end{tikzcd}
\end{center}
where $f_\pp$ denotes $k\otimes_R^L-$.
\end{recollection}

\begin{proposition}\label{prop:adjointable square for rep}
    In the context of Recollection \ref{recollection:commutative diagram for Rep}, we have that 
    \[
    \mathrm{Infl} (u_\pp (\mathbb{1}_{\mathbf{D}(k(\pp))}))= v_\pp(\mathrm{Infl}(\mathbb{1}_{\mathbf{D}(k(\pp))})).
    \]
    for each $\pp\in\mathrm{Spec}(R)$.
\end{proposition}

\begin{proof}
Fix $\pp\in\mathrm{Spec}(R)$. For simplicity, let us write $k$ instead $k(\pp)$. We have the following diagram 
    \begin{equation}\label{eq-3}
        \begin{tikzcd}
 \mathrm{Rep}(G;R)   \arrow[d, "g_\pp" left, ->,shift left=0] \arrow[d, "v_\pp", <-,shift left=2] \arrow[r,"\mathbf{i}" below, <-,shift left=0] \arrow[r,"\mathbf{q}",->,shift left=2] & \mathbf{D}(RG)  \arrow[d,"-\otimes_R^L k" left, ->,shift left=0]  \arrow[d, "\textrm{inc}", <-,shift left=2] \\  \mathrm{Rep}(G_k;k) \arrow[r,"\mathbf{i_\pp}" below, <-,shift left=0] \arrow[r,"\mathbf{q}_\pp",->,shift left=2] & \mathbf{D}(kG_k)  
    \end{tikzcd}
    \end{equation}
Where $\mathbf{q}$ and $\mathbf{q}_\pp$ are the functors obtained by inverting quasi-isomorphisms, respectively. One verifies that the left adjoints from the top-left corner to the right-bottom corner in the previous square commute, that is, $-\otimes_k^L\circ \mathbf{q} = \mathbf{q}_\pp\circ g_\pp$. Hence the right adjoints in the opposite direction also commute. 

On the other hand, we have a diagram 
 \begin{equation}
     \label{eq-4}
      \begin{tikzcd}
        \mathbf{D}(R)   \arrow[r,"u_\pp" below, <-,shift left=0] \arrow[r,"f_\pp",->,shift left=2]    \arrow[d,"\mathrm{Infl}"'] &     \mathbf{D}(k(\pp)) \arrow[d,"\mathrm{Infl}"]   \\ 
       \mathbf{D}(RG) \arrow[r,"v_\pp" below, <-,shift left=0] \arrow[r,"-\otimes_R^Lk",->,shift left=2]   & \mathbf{D}(kG_{k})) 
    \end{tikzcd}
 \end{equation}
Since both $u_\pp$ and $v_\pp$ are induced by  restriction along $R\to k$, one gets that this square is  horizontally right adjointable; that is, $\mathrm{Infl}\circ u_\pp\simeq v_\pp\circ\mathrm{Infl}$.  

Now, consider the diagram 
\begin{center}
    \begin{tikzcd}
        \mathbf{D}(R)  &  \\ 
          \mathrm{Rep}(G;R)    \arrow[u,"(-)_{hG}"]  \arrow[r,"\mathbf{q}"]      &   \mathbf{D}(RG) \arrow[lu,"(-)_{hG}"'] 
    \end{tikzcd}
\end{center}
where $(-)_{hG}$ denotes the homotopy orbits functor, which is left adjoint to the inflation functor. One verifies that the diagram is commutative. Indeed, since all the functors involved are left adjoints, it suffices to check the claim on compact objects in $\mathrm{Rep}(G;R)$. In this case, $\mathbf{q}$ identifies such compact objects with a full subcategory of $\mathbf{D}(RG)$, and the result follows immediately.
It then follows that the right adjoints also commute, that is, $\mathbf{i}_G \circ \mathrm{Infl} \simeq \mathrm{Infl}$. An analogous statement holds upon replacing $R$ by $k$ and $G$ by $G_k$. The result now follows from this observation together with a straightforward diagram chase using the diagrams in Equations \ref{eq-3} and \ref{eq-4}.
\end{proof}

\begin{theorem}
    \label{building for rep}
    With the same notation as in Proposition \ref{prop:adjointable square for rep}, we have  \[
    \mathbb{1}_{\mathrm{Rep}(G;R)}\in \mathrm{Loc}^\otimes\left(\coprod_{\pp\in \mathrm{Spec}(R)} v_\pp(\mathbb{1}_{\mathrm{Rep}(G_{k(\pp)};k(\pp))}) \right).
    \]
    In particular, for $x$ and $y$ in $\mathrm{Rep}(G;R)$, we have that $x\in \mathrm{Loc}^\otimes(y)$ if and only if  
    \[
    g_\pp (x)\in \mathrm{Loc}^\otimes (g_\pp (y))
    \]
    and hence the family of functors $\{g_\pp\}$ is jointly conservative. 
\end{theorem}

\begin{proof}
     By Proposition \ref{prop:adjointable square for rep}, we deduce that the family of functors $\{g_\pp\}$ and $\{f_\pp\}$ satisfy the  Hypotheses \ref{Hypotehes}. Hence, by  Theorem \ref{thm of neeman} we can invoke Lemma \ref{descending building} which proves the first claim. The second claim is now an immediate consequence of Corollary~\ref{coro 2}.
\end{proof}

\begin{corollary}\label{spectrum of rep}
     Let $G$ be a finite group scheme defined over a commutative Noetherian base $R$. Then the comparison map from \cite{Bal10} 
     \[
     \mathrm{Comp}_G\colon \mathrm{Spc}(\mathbf{D}^b(\mathrm{mod}(G;R)))\to \mathrm{Spec}^h(H^\ast(G;R))
     \]
     is an homeomorphism.  In particular, $\mathrm{Spc}(\mathbf{D}^b(\mathrm{mod}(G;R)))$ is Noetherian. 
\end{corollary}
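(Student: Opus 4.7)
The plan is to compare $\mathrm{Spc}(\mathbf{D}^b(\mathrm{mod}(G,R)))$ with $\mathrm{Spec}^h(H^\ast(G,R))$ by fiberwise reduction to residue fields, combining Corollary \ref{building for rep} with the descent results of \cite{BCHS23} and the known field case.

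First, by van der Kallen's cohomological finite generation \cite{vdk23}, the graded ring $H^\ast(G,R)$ is finitely generated over $R$, hence Noetherian, so $\mathrm{Spec}^h(H^\ast(G,R))$ is a Noetherian topological space. By Balmer's general theory \cite{Bal10}, $\mathrm{Comp}_G$ is then automatically surjective and continuous, reducing the task to showing injectivity (and closedness).

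By Corollary \ref{building for rep}, the family of fiber functors $\{g_\pp\}_{\pp\in\mathrm{Spec}(R)}$ is jointly conservative on $\mathrm{Rep}(G,R)$; this descends to joint conservativity of the compact functors $g_\pp^c\colon \mathbf{D}^b(\mathrm{mod}(G,R)) \to \mathbf{D}^b(\mathrm{mod}(G_{k(\pp)}, k(\pp)))$, since any compact object vanishing on every fiber must vanish. By the main surjectivity theorem of \cite{BCHS23}, the induced map
$$\bigsqcup_\pp \mathrm{Spc}(\mathbf{D}^b(\mathrm{mod}(G_{k(\pp)}, k(\pp)))) \twoheadrightarrow \mathrm{Spc}(\mathbf{D}^b(\mathrm{mod}(G,R)))$$
is surjective. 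Over each residue field $k(\pp)$, the comparison map $\mathrm{Comp}_{G_{k(\pp)}}$ is already known to be a homeomorphism; see \cite{BBIKP} and the references therein.

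By naturality of Balmer's comparison map, these assemble into a commutative square whose left vertical arrow is a disjoint union of homeomorphisms and whose top horizontal arrow is surjective. The main obstacle is extracting injectivity of $\mathrm{Comp}_G$ from this. My plan is to stratify both spectra by the underlying prime in $\mathrm{Spec}(R)$ (coming from the central $R$-action on $\mathbf{D}^b(\mathrm{mod}(G,R))$ and the evident $R$-algebra structure on $H^\ast(G,R)$), identify the $\pp$-stratum on the Balmer side with the image of $g_\pp^c$ and the $\pp$-stratum on the cohomology side with $\mathrm{Spec}^h$ of the fiber cohomology ring, so that within each stratum the map reduces to $\mathrm{Comp}_{G_{k(\pp)}}$, which is a bijection. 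Assembling strata yields the desired set-theoretic bijection, and closedness follows from Noetherianness of the target. The Noetherianness of $\mathrm{Spc}(\mathbf{D}^b(\mathrm{mod}(G,R)))$ is then inherited immediately via the homeomorphism.
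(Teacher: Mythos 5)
Your overall route---joint conservativity of the fiber functors from Corollary \ref{building for rep}, surjectivity of $\coprod_{\mathfrak{p}}\mathrm{Spc}(\mathbf{D}^b(\mathrm{mod}(G_{k(\mathfrak{p})},k(\mathfrak{p}))))\to\mathrm{Spc}(\mathbf{D}^b(\mathrm{mod}(G,R)))$ via \cite{BCHS23}, and the field case---is the same as the paper's, but the step you present as a ``plan'' conceals the one genuinely nontrivial ingredient. You assert that the $\mathfrak{p}$-stratum of $\mathrm{Spec}^h(H^\ast(G,R))$ (the fiber over $\mathfrak{p}$ of the evident map to $\mathrm{Spec}(R)$) can be identified with $\mathrm{Spec}^h(H^\ast(G_{k(\mathfrak{p})},k(\mathfrak{p})))$. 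This is not a formal consequence of the $R$-algebra structure: cohomology of a finite group scheme does not commute with base change along $R\to k(\mathfrak{p})$, so the map $H^\ast(G,R)\otimes_R k(\mathfrak{p})\to H^\ast(G_{k(\mathfrak{p})},k(\mathfrak{p}))$ need not be an isomorphism, and one has to prove that it nevertheless induces a bijection fiberwise on homogeneous spectra. That statement---bijectivity of $\coprod_{\mathfrak{p}}\mathrm{Spec}^h(H^\ast(G_{k(\mathfrak{p})},k(\mathfrak{p})))\to\mathrm{Spec}^h(H^\ast(G,R))$---is precisely \cite[Theorem C]{BBIKP}; it rests on van der Kallen's cohomological finite generation theorem \cite{vdk23} and is the input the paper feeds, together with joint conservativity and the field-case homeomorphisms \cite{BIKP}, into the packaged criterion \cite[Corollary 1.12]{BCHS23}. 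Without it your injectivity argument does not run: if $\mathrm{Comp}_G(\P_1)=\mathrm{Comp}_G(\P_2)$ and you write $\P_i$ as images of fiber primes $\mathcal{Q}_i$ over the same $\mathfrak{p}$, you need injectivity of $\mathrm{Spec}^h(H^\ast(G_{k(\mathfrak{p})},k(\mathfrak{p})))\to \mathrm{Spec}^h(H^\ast(G,R))$ to conclude $\mathrm{Comp}_{G_{k(\mathfrak{p})}}(\mathcal{Q}_1)=\mathrm{Comp}_{G_{k(\mathfrak{p})}}(\mathcal{Q}_2)$, and that is exactly the missing theorem, not an evident stratification.

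A second, smaller gap: ``closedness follows from Noetherianness of the target'' is not a valid step. A continuous bijection onto a Noetherian (even finite) space need not be closed---for instance the identity from the two-point discrete space to the Sierpi\'nski space. Promoting the set-theoretic bijection to a homeomorphism requires an actual topological argument (e.g.\ the spectral/specialization-reflecting analysis carried out in \cite[Corollary 1.12]{BCHS23}), which is why the paper invokes that result rather than assembling the strata by hand. In short: your skeleton matches the paper's, but you must cite or reprove \cite[Theorem C]{BBIKP} for the cohomological fibers and replace the closedness remark by the argument of \cite[Corollary 1.12]{BCHS23} (or an equivalent one) to close the proof.
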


As mentioned above, this result was previously obtained as a consequence of BIK-stratification for $\mathrm{Rep}(G;R)$, as proved in \cite{BBIKP}. Our aim here is to provide an alternative proof of stratification in this setting. To this end, we still require some topological information about $\mathrm{Spc}(\mathbf{D}^b(\mathrm{mod}(G;R)))$. We therefore explain how to combine our results with those of \cite{BCHS23} to recover the statement, thereby avoiding the use of BIK-stratification.

The argument follows the same lines as \cite[Example 1.6]{BCHS23} in the case of finite groups. Moreover, Remark 1.7 of that paper states the result in full generality, although it relies on \cite[Theorem B]{BBIKP}. The latter, however, is already a consequence of Theorem \ref{building for rep}. For the reader’s convenience, we include the complete argument here.

\begin{proof}[Proof of Corollary~\ref{spectrum of rep}]
By \cite{vdk23} we have that $\mathrm{Rep}(G;R)$ is a Noetherian category.
 By Theorem \ref{building for rep}, we get that the family of functors $g_\pp\colon \mathrm{Rep}(G;R)\to \mathrm{Rep}(G_{k(\pp)};k(\pp))$ indexed over $\mathrm{Spec}(R)$ is jointly conservative. On the other hand, each comparison map 
 \[
 \mathrm{Comp}_\pp\colon \mathrm{Spc}(\mathbf{D}^b(\mathrm{mod}(G_{k(\pp)};k(\pp))))\to \mathrm{Spec}^h(H^\ast(G_{k(\pp)};k(\pp)))
 \]
 is a homeomorphism by stratification for $\mathrm{Rep}(G_{k(\pp)};k(\pp))$ \cite{BIKP}. Moreover, the morphism 
 \[
 \coprod_{\pp\in \mathrm{Spec}(R)} \mathrm{Spec}^h(H^\ast(G_{k(\pp)};k(\pp)))  \to  \mathrm{Spec}^h(H^\ast(G;R))
 \]
 is bijective by \cite[Theorem C]{BBIKP}. Hence we have all the ingredients to invoke \cite[Corollary 1.12]{BCHS23}, and the result follows. 
\end{proof}

\begin{remark}
We stress that the approach in \cite{BBIKP} also relies on van der Kallen's result \cite{vdk23} in a non-trivial way. In fact, it serves as a crucial ingredient in their proof. This result is far from trivial, and the brevity of \textit{loc. cit.} may not fully reflect its significance.
\end{remark}

\begin{theorem}\label{coro: stratification for rep}
    Let $G$ be a finite group scheme defined over a commutative Noetherian base $R$. Then $\mathrm{Rep}(G;R)$ is stratified. In particular, the Telescope Property holds for $\mathrm{Rep}(G;R)$. 
\end{theorem}

\begin{proof}
By \cite{BIKP} we know that $\mathrm{Rep}(G_{k(\pp)};k(\pp))$ is stratified for each $\pp\in \mathrm{Spec}(R)$. By Corollary \ref{spectrum of rep} we know that $\mathrm{Spc}(\mathbf{D}^b(\mathrm{mod}(G;R)))$ is Noetherian and each functor $g_\pp$ induce an injection on Balmer spectra. Hence, Theorem \ref{building for rep} provides the remaining ingredient needed to invoke Corollary \ref{coro:fiberwise stratification}, which completes the proof.
\end{proof}

\section{Noetherianity of the Balmer spectrum}\label{sec:Noetherianity}

In order to complete the proof of stratification for the big derived category of permutation modules $\T(G;R)$, where $G$ is a finite group and $R$ is a commutative Noetherian ring, we require some topological information about the Balmer spectrum of the small derived category of permutation modules $\K(G;R)$. More precisely, this space must at least be weakly Noetherian. In this section, we prove the stronger statement that $\mathrm{Spc}(\K(G;R))$ is in fact Noetherian. We also show that the base change functors to residue fields induce a set partition of the spectrum. 

When $R$ is a field, Noetherianity was established in \cite{BG25} using the conservativity of certain functors involving the so-called modular fixed points. Extending these modular fixed point functors to a more general setting appears difficult, since, as the terminology suggests, the characteristic of the field plays a crucial role in their construction. Nevertheless, an analogous construction is available for rings of prime characteristic.

Our strategy to show Noetherianity is as follows. First, we prove Noetherianity for $p$-groups, we then apply standard techniques from tt-geometry to deduce the general case. In the case of $p$-groups, we isolate two key situations: one in which the ring has positive characteristic, and another in which $p$ is invertible in the ring. Combining these two cases will yield the general result. 

\subsection{Reduction to $p$-groups}
 Recall that $R$ denotes a commutative Noetherian  ring and $\K(G;R)$ denotes the small derived category of permutation $RG$--modules. The following lemma allows us to focus on $p$-groups to prove Noetherianity of the Balmer spectrum of the small derived category of permutation modules.

\begin{lemma}\label{cover by p-subgroups}
  Let $G$ be a finite group and  $\mathcal{E}$ denote the family of $p$-power order subgroups of $G$, where $p$ runs over all  primes dividing the order of $G$. Then the restriction functors induce a continuous surjection $$\coprod_{H\in \mathcal{E}}\mathrm{Spc}(\K(H;R))\to \mathrm{Spc}(\K(G;R)).$$
\end{lemma}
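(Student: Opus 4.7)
Continuity of the displayed map is automatic, since each component $\mathrm{Spc}(\mathrm{Res}^G_E)$ is induced by a tt-functor; only surjectivity requires argument.

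I would first reduce to Sylow subgroups. For $E \leq E'$ both $p$-groups, the factorization $\mathrm{Res}^G_E = \mathrm{Res}^{E'}_E \circ \mathrm{Res}^G_{E'}$ gives $\mathrm{im}\big(\mathrm{Spc}(\mathrm{Res}^G_E)\big) \subseteq \mathrm{im}\big(\mathrm{Spc}(\mathrm{Res}^G_{E'})\big)$. Since every element of $\mathcal{E}$ sits inside some Sylow subgroup, it suffices to show that the family $\{\mathrm{Res}^G_{P_p}\}$, indexed by one Sylow $p$-subgroup $P_p$ per prime $p \mid |G|$, is jointly surjective on Balmer spectra.

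The algebraic heart of the argument is a Bezout splitting. Since $\gcd_{p \mid |G|}[G:P_p] = 1$, there exist integers $a_p$ with $\sum_p a_p[G:P_p] = 1$. The $RG$-linear transfer $R \to R[G/P_p]$, $1 \mapsto \sum_{gP_p} gP_p$, composed with the augmentation $R[G/P_p] \to R$ is multiplication by $[G:P_p]$; assembling these with coefficients $a_p$ exhibits $R$ as a direct summand of $\bigoplus_p R[G/P_p]$ in $\perm(G,R)$. Hence
\[
\mathbb{1}_{\K(G,R)} \in \mathrm{thick}^{\otimes}\!\big(R[G/P_p] : p \mid |G|\big), \qquad \mathrm{supp}\!\Big(\!\bigoplus_p R[G/P_p]\Big) = \mathrm{Spc}(\K(G,R)).
\]

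Finally, each $\mathrm{Res}^G_{P_p}$ admits the ambidextrous adjoint $\mathrm{Ind}_{P_p}^G$, which satisfies the projection formula and carries $\mathbb{1}$ to $R[G/P_p]$. By the standard description of the image of a spectrum map as the support of the right adjoint applied to the monoidal unit (cf.\ \cite{BDS} and the related results invoked in \cite{BCHS23}),
\[
\mathrm{im}\big(\mathrm{Spc}(\mathrm{Res}^G_{P_p})\big) = \mathrm{supp}_{\K(G,R)}(R[G/P_p]).
\]
Taking the union over $p$ and combining with the previous display yields surjectivity. The main obstacle is the algebraic splitting of $R$ off $\bigoplus_p R[G/P_p]$ inside $\perm(G,R)$; once this Sylow-theoretic input is recorded, the passage from a spanning family of objects to a spectrum cover is formal.
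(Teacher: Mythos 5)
Your proof is correct, and its algebraic core is exactly the paper's: the unit $R$ is a retract of $\bigoplus_p R[G/P_p]$ over the Sylow subgroups, which you establish by the transfer--augmentation--Bezout argument (this is precisely the content of the result of Carlson that the paper cites). Where you diverge is in the tt-geometric packaging. The paper bundles all of $\mathcal{E}$ into a single finite-degree separable commutative algebra $A_{\mathcal{E}}=\prod_{E\in\mathcal{E}}R[G/E]$, observes that the retraction means $A_{\mathcal{E}}$ admits descent, and invokes Balmer's descent theorem to realize $\mathrm{Spc}(\mathcal{K}(G,R))$ as a coequalizer of $\coprod_E\mathrm{Spc}(\mathcal{K}(E,R))$ (after identifying $\mathrm{Mod}_{\mathcal{K}(G,R)}(A_{\mathcal{E}})$ with $\prod_E\mathcal{K}(E,R)$), so surjectivity is immediate. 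You instead argue Sylow-by-Sylow via the criterion $\mathrm{im}\,\mathrm{Spc}(\mathrm{Res}^G_{P})\supseteq\mathrm{supp}(\mathrm{Ind}_P^G\mathbb{1})=\mathrm{supp}(R[G/P])$ and then cover the spectrum using the retraction; note that only this inclusion is needed (the reverse one follows from the Mackey formula, since $\mathrm{Res}^G_P R[G/P]$ has $\mathbb{1}$ as a summand). Both routes rest on the same Balmer machinery for separable/finite adjunctions, so the difference is one of emphasis: yours avoids the descent coequalizer and is a bit more hands-on, while the paper's gives the stronger coequalizer description for free. One small correction: \cite{BDS} is about the existence of adjoints for geometric functors between big tt-categories and is not the right reference for the image-equals-support statement at the level of small categories; cite instead Balmer's separable-extension results (the same source the paper uses for descent, where the image of $\mathrm{Spc}(\mathrm{Mod}_A)\to\mathrm{Spc}(\mathcal{K})$ is identified with $\mathrm{supp}(A)$) or his surjectivity criterion for tt-functors admitting a right adjoint satisfying the projection formula.
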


We need some preparation. 

\begin{recollection}\label{rec-separable-algebra-AH}
Let $H$ be a subgroup of $G$. It has been shown in \cite{Bal17} that the adjunction given by restriction-induction $\mathrm{res}_H^G\dashv \mathrm{ind}_H^G$ on the abelian category of $RG$--modules is monadic. This monadic adjunction extends verbatim to a monadic adjunction 
\[
\mathrm{res}_H^G\colon \K(G;R)\rightleftarrows \K(H;R) \colon \mathrm{ind}_H^G
\]
and the monad can be identified with the monad obtained by tensoring with the commutative algebra $(A_H,\mu,\eta)$ with $A_H=R(G/H)$, the multiplication is given by $\mu(\gamma\otimes \gamma')=\gamma $ if $\gamma=\gamma'$ in $G/H$ or $0$ otherwise,  and  unit $\eta(1)=\sum{\gamma\in G/H} \gamma$. Moreover, the algebra $A_H$ is separable\footnote{These objects are also known as tt-rings.} and has finite degree. Indeed, the section is given by the map $\gamma \mapsto \gamma\otimes \gamma$. In particular, one can identify the category $\mathrm{Mod}_{\K(G;R)}(A_H)$ with $\K(H;R)$ and we can identify the restriction functor with extension of scalars and induction with the forgetful functor. 
\end{recollection}

\begin{proposition}\label{prop-sep-A}
    The commutative algebra  $A_\mathcal{E}=\prod_{H\in \mathcal{E}}A_H$ is separable in $\K(G;R)$ and has finite degree. Moreover, $A_\mathcal{E}$ admits descent, that is, the smallest thick tensor ideal in $\K(G;R)$ containing $A_\mathcal{E}$ is all $\K(G;R)$. 
\end{proposition}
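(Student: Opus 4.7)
The plan is to prove the three assertions in turn. Separability and finite degree for $A_\mathcal{E}$ will reduce immediately to the corresponding properties of each individual $A_H$ recorded in the preceding recollection, since $\mathcal{E}$ is finite and $A_\mathcal{E}$ is a finite coproduct. Descent is an elementary B\'ezout-style argument which exhibits the monoidal unit $\mathbb{1}=R$ as a direct summand of a finite subsum of $A_\mathcal{E}$, already at the level of the abelian category $\Perm(G,R)$.

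For separability and finite degree, note that since $G$ is finite, $\mathcal{E}$ is finite, and so $A_\mathcal{E}=\prod_{H\in\mathcal{E}}A_H$ coincides with the finite coproduct $\bigoplus_{H\in\mathcal{E}}A_H$. A standard fact from Balmer's theory of tt-rings is that a finite coproduct of separable commutative algebras of finite degree in a tt-category is again separable and of finite degree: a bimodule section of the multiplication on $A_\mathcal{E}$ is the direct sum of the bimodule sections for each factor, and the degree is controlled by the sum of the degrees of the factors. The recollection already provides both properties for each $A_H$, so this step is purely formal.

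For descent, it suffices to show $\mathbb{1}=R$ lies in $\mathrm{thick}^{\otimes}(A_\mathcal{E})$, since any thick tensor ideal containing the unit exhausts the category. For each prime $p$ dividing $|G|$, pick a Sylow $p$-subgroup $E_p\leq G$; then $E_p\in\mathcal{E}$. The indices $\{[G:E_p]\}_{p\mid |G|}$ have greatest common divisor $1$ in $\Z$: by Sylow's theorem, each $[G:E_p]$ is coprime to $p$, so no prime divisor of $|G|$ can divide all of them simultaneously. By B\'ezout there exist integers $a_p$ with $\sum_{p}a_p\,[G:E_p]=1$. Now the $RG$-linear maps $\eta_H\colon R\to R[G/H]$, $r\mapsto r\sum_{gH}gH$, and the augmentation $\varepsilon_H\colon R[G/H]\to R$ satisfy $\varepsilon_H\circ\eta_H=[G:H]\cdot\mathrm{id}_R$. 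Packaging these together produces maps
\[
\eta=(\eta_{E_p})_p\colon R\longrightarrow \bigoplus_{p\mid|G|} R[G/E_p],\qquad \varepsilon=(a_p\varepsilon_{E_p})_p\colon \bigoplus_{p\mid|G|} R[G/E_p]\longrightarrow R,
\]
with $\varepsilon\circ\eta=\mathrm{id}_R$. This exhibits $R$ as a direct summand of $\bigoplus_{p}R[G/E_p]$ inside $\Perm(G,R)$, and hence inside $\K(G,R)$; since $\bigoplus_{p}R[G/E_p]$ is itself a summand of $A_\mathcal{E}$, we conclude $\mathbb{1}\in\mathrm{thick}^{\otimes}(A_\mathcal{E})$, which is descent.

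I do not anticipate a genuine obstacle: the argument is a standard B\'ezout--Sylow trick combined with the observation that the unit and augmentation maps of the permutation algebra $R[G/H]$ compose to multiplication by $[G:H]$. The only small point worth verifying carefully is the GCD computation, since that is where the choice of the family $\mathcal{E}$ of $p$-power order subgroups (in particular, its containing a Sylow $p$-subgroup for every prime $p\mid |G|$) is really used. No appeal to the idempotent completion of $\mathbf{K}^b(\perm(G,R))$ is needed, as the splitting already takes place in $\Perm(G,R)$.
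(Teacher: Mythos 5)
Your proof is correct and takes essentially the same approach as the paper, but with one difference in presentation worth noting: where the paper cites Corollary~2.2 of Carlson (and its proof) for the fact that $R$ is a retract of $\bigoplus_p R[G/S_p]$ over the Sylow subgroups, you reproduce the underlying argument explicitly via the B\'ezout--Sylow trick, exhibiting the unit and augmentation maps $\eta_H$, $\varepsilon_H$ with $\varepsilon_H\circ\eta_H=[G:H]\cdot\mathrm{id}_R$ and combining them with B\'ezout coefficients. This makes your proof self-contained at the cost of a few extra lines, and it correctly identifies that the splitting already occurs in $\Perm(G,R)$, so no idempotent completion is needed for the descent step. The separability and finite-degree part is identical: both you and the paper observe that $\mathcal{E}$ is finite and that finite coproducts of finite-degree separable commutative algebras remain finite-degree separable. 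One small remark: you should be slightly careful that the gcd argument needs not only that each $[G:E_p]$ is prime to $p$, but also that no prime \emph{outside} the divisors of $|G|$ can divide all the indices --- this holds because every $[G:E_p]$ divides $|G|$, and you implicitly use it; it would be cleaner to say so.
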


\begin{proof}
    The first part follows since each $A_H$ is a finite degree separable commutative algebra and $\mathcal{E}$ is finite.
    The second part is a consequence of Corollary 2.2 in \cite{Car00} and its proof,  which gives us that the trivial module $R$ is a retract of $\oplus_{S\in \mathrm{Syl}(G)} R(G/S)$ where $\mathrm{Syl}(G)$ denotes the collection of Sylow subgroups of $G$.  This means that  $R$ is in the thick subcategory of $\K(G;R)$ generated by modules induced from $p$-power order subgroups of $G$, and therefore in the thick subcategory generated by $A_\mathcal{E}$. This implies the result.  
\end{proof}

\begin{remark}
One may wonder whether the previous statement can be further reduced to elementary abelian subgroups, as is often possible in other categories associated with the group algebra (for example, the stable module category). However, this is not the case, even over a field.

Indeed, consider the separable algebra $A$ constructed above but using only elementary abelian subgroups. If $A$ satisfied descent, then one could readily deduce that the Balmer spectra of the elementary abelian subgroups cover the Balmer spectrum of the whole group. However, this is far from true. For example, let $C_n$ denote the cyclic group of order $p^n$ for $n>0$, and let $R$ be a field of characteristic $p$. In \cite[Section 8]{BG25}, the authors showed that $\mathrm{Spc}(\K(C_n;R))$ has $2n+1$ points. Consequently, $\mathrm{Spc}(\K(C_1;R))$ is insufficient to cover $\mathrm{Spc}(\K(C_n;R))$ whenever $n>1$. We thank Martin Gallauer for pointing out this argument.
\end{remark}


\begin{proof}[Proof of Lemma \ref{cover by p-subgroups}]
    By the previous result we have that $A_\mathcal{E}$ satisfies the hypothesis of \cite[Theorem 3.19]{Bal16}, hence  we obtain a coequalizer of topological spaces 
    \[
    \mathrm{Spc}(\Mod_{\K(G;R)}(A_\mathcal{E}\otimes A_\mathcal{E}))\rightrightarrows \mathrm{Spc}(\Mod_{\K(G;R)}(A_\mathcal{E}))\xrightarrow{\varphi} \mathrm{Spc}(\K(G;R))
    \]
    where $\varphi$ is the induced map in spectra by extension of scalars $F_{A_\mathcal{E}}\colon\K(G;R)\to \Mod_{\K(G;R)}(A_\mathcal{E})$. But note that we can identify $\Mod_{\K(G;R)}(A_\mathcal{E})$ with $$\prod_{H\in\mathcal{E}} \Mod_{\K(G;R)}(A_H)\cong \prod_{H\in\mathcal{E}} \K(H;R)$$ and under this identification the functor        $F_{A_\mathcal{E}}$ corresponds to the functor induced by the restrictions $\K(G;R)\to \K(H;R)$, hence the result follows. 
\end{proof}

\subsection{Reduction to rings of positive characteristic}  Let $G$ denote a $p$-group, and $R$ be an arbitrary commutative Noetherian ring.

\begin{proposition}\label{prop-p-invertible}
    If $|G|$ is invertible in $R$, then we have that Balmer's comparison map \cite{Bal10} 
    \[
     \rho\colon \mathrm{Spc}(\K(G;R))\to \mathrm{Spec}^h(R)
    \]
    is a homeomorphism. 
\end{proposition}

\begin{proof}
   Using that   $|G|$ is invertible in $R$, it is straightforward to verify that any $R$--projective $RG$--module is $RG$--projective. In particular, we deduce that 
   \[
   \K(G;R)\simeq \mathbf{D}^b(\mathrm{mod}(G;R)).
   \]
   On the other hand, by Corollary \ref{spectrum of rep}, we known that 
   \[
     \rho\colon \mathrm{Spc}(\mathbf{D}^b(\mathrm{mod}(G;R)))\to \mathrm{Spec}^h(H^\ast(G;R))
     \]
     is a homeomorphism. But in this case $H^\ast(G;R)\simeq R$. This completes the proof. 
\end{proof}

With this result in place, we now show that one can reduce the Noetherianity problem to the case of rings of positive characteristic. 

\begin{proposition}\label{prop-reduction-Rcharp}
    Let $R$ be a commutative Noetherian ring. Assume that the space  $\mathrm{Spc}(\K(G;R/pR))$ is Noetherian. Then $\mathrm{Spc}(\K(G;R))$ is Noetherian as well.  
\end{proposition}

\begin{proof}
   Let $n=|G|$.  Consider the tt-functor 
   \[
   F\colon \K(G;R)\to \K(G;R/pR)\times \K(G;R[1/n])
   \]
  induced by base change along $R\to R/pR$ and $R\to R[1/n]$. We claim that this functor induces a surjection on Balmer spectra. Indeed, since any residue field of $R$ will factor either through $R/pR$ or $R[1/n]$, then we obtain a commutative diagram 
   \[
   \begin{tikzcd}
       \mathrm{Spc}(\K(G;R/pR))\amalg   \mathrm{Spc}(\K(G;R[1/n]))  \arrow[r] &   \mathrm{Spc}(\K(G))\\ 
       \coprod_{\pp\in \mathrm{Spec}(R)}\mathrm{Spc}(\K(G;k(\pp))) \arrow[ru] \arrow[u] & 
   \end{tikzcd}
   \]
   where all arrows are induced by the corresponding base change on the ground rings. Now the claim follows since the diagonal arrow is surjective by a combination of Theorem \ref{cor:joitly conservaty for T} and \cite[Theorem 1.3]{BCHS23}. The result now follows by Proposition \ref{prop-p-invertible}, which allows to  identify $\mathrm{Spc}(\K(G;R[1/n])) $ with $\mathrm{Spec}(R[1/n])$, and the latter is Noetherian since $R$ is Noetherian.  
   \end{proof}

\subsection{Case of ring of positive characteristic}
Let $G$ denote a $p$-group. In this section we assume that $R$ has characteristic $p$. 

Recall that $\mathrm{perm}(G;R)$ denotes the category of finitely generated permutation $RG$--modules. Let $N$ be a normal subgroup of $G$, and $\mathcal{F}_N=\{H\leq G\mid N\not\leq H\}$. Set  
\[
\mathrm{proj}(\mathcal{F}_N)= \mathrm{add}^\natural\{R(G/H)\mid H\in \mathcal{F}_N\}
\]
Note that the additive quotient  
\[
\mathrm{qout}^G_H\colon \mathrm{perm}(G;R)^\natural\xrightarrow{} \mathrm{perm}(G;R)^\natural/\mathrm{proj}(\mathcal{F}_N)
\]
is symmetric monoidal. Indeed, the subcategory $\mathrm{proj}(\mathcal{F}_N)$ is a tensor-ideal by the Mackey formula. Hence  the category  $\mathrm{perm}(G;R)/\mathrm{proj}(\mathcal{F}_N)$ is symmetric monoidal, and the functor $\mathrm{qout}^G_H$ is strongly monoidal. 

\begin{proposition}\label{prop-equivalence-normal}
    Let $N\unlhd G$ be a normal subgroup. Then the composite 
    \[
   \mathrm{perm}(G/N;R)\xrightarrow[]{\mathrm{Infl}} \mathrm{perm}(G;R)^\natural\xrightarrow{\mathrm{qout}} \mathrm{perm}(G;R)^\natural/\mathrm{proj}(\mathcal{F}_N)
    \]
    is an equivalence of tensor additive categories. 
\end{proposition}
\begin{proof}
    Given that we are assuming that the characteristic of $R$ is $p$, the same proof as in \cite[Proposition 5.4]{BG25} gives us the result. 
\end{proof}

\begin{construction}\label{cons-triangular fixed points}
   For a normal subgroup $N$ we obtain a tt-functor  on bounded homotopy categories 
\[
\psi^N \colon \K(G;R)\xrightarrow{} \K(G/N;R)
\]
by applying the equivalence form Proposition \ref{prop-equivalence-normal} degree-wise. Now, let $H\leq G$ be a subgroup. The $H$-modular fixed points functor is the tt-functor given as the composite 
\[
\Psi^H\colon \K(G;R) \xrightarrow[]{\mathrm{res}}\K(N_G(H);R)\xrightarrow[]{\Psi^{N_G(H),H}}\K(\WGH;R).
\]
\end{construction}

Let us record a simple observation that relates the functor $\Psi^H$ with modular fixed points introduced in \cite{BG25} after base change. But first, recall that our assumption on $R$ ensures that any residue field of it has characteristic $p$. 

\begin{proposition}
\label{prop-basechange-psi}
Let $\pp\in \mathrm{Spec}(R)$, and $H$ be a subgroup of $G$. Then we have a commutative diagram 
\[
\begin{tikzcd}
     \K(G;R) \arrow[r,"\Psi^H"] \arrow[d] & \K(\WGH;R) \arrow[d]\\
     \K(G;k(\pp))\arrow[r,"\Psi^H"] &  \K(\WGH;k(\pp))
\end{tikzcd}
\]
where the vertical arrows are induced by base change to the residue fields, and the bottom map is the  modular $H$-fixed points functor. 
\end{proposition}

\begin{proof}
    This already holds at the level of the additive categories of permutation modules and noticing that $\mathrm{proj}(\mathcal{F}_N)$ is sent to the corresponding tensor ideal defining the modular fixed points under base change. 
\end{proof}

 We need some preparations before we address  Noetherianity of the Balmer spectrum in this setting. 
 We will use the same notation as in \cite{BG22}. 

 \begin{recollection}
 Let $G$ be a $p$-group. Recall that there is a localization functor 
 \[
 \Upsilon^G\colon\T(G;R)\to \mathbf{K}^\textrm{perm}(\mathrm{Inj}(RG))
 \]
 where the right-hand side category is the localizing subcategory of $\mathbf{K}( \mathrm{Inj}(RG))$ generated by Tate resolutions of permutation $RG$--modules. This functor,  on compacts, gives a localization functor 
 \[
 \Upsilon^G\colon\K(G;R)\to \mathbf{D}^\mathrm{perm}(RG)
 \]
 where the right-hand side category denotes the thick subcategory of $\mathbf{D}^b(RG)$ generated by the finitely generated permutation $RG$-modules. In particular, when $R$ is regular, there are equivalences $\mathrm{KInj}^\textrm{perm}(G;R)\simeq \mathbf{K}( \mathrm{Inj}(RG))$ and $\mathbf{D}^\mathrm{perm}(RG)\simeq \mathbf{D}^b(RG)$, see Remark \ref{Db is a localization of KG}.      
 \end{recollection}

Now we are ready to prove Noetherianity for $p$-groups over rings of characteristic $p$.

\begin{theorem}\label{cover of spc for p-group}
  Let $G$ be a $p$-group and $R$ be a commutative Noetherian ring of characteristic $p$. Then there is a continuous surjection 
  \[
  \coprod_{H\in\mathrm{Sub} (G)} \mathrm{Spc}(\mathbf{D}^b(\mathrm{mod}(\WGH;R))) \to \mathrm{Spc}(\K(G;R))
  \]
  where $\mathrm{Sub} (G)$ denotes the family of subgroups of $G$. In particular, the Balmer spectrum of $\K(G;R)$ is a Noetherian space. 
\end{theorem}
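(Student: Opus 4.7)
The plan is to construct, for each subgroup $H$ of $G$, a continuous map
$$\Phi_H\colon \mathrm{Spc}(\mathbf{D}^b(\mathrm{mod}(\WGH,R)))\to \mathrm{Spc}(\K(G,R)),$$
and then deduce surjectivity of the combined map $\coprod_H \Phi_H$ by a fiberwise reduction to the field case of Balmer--Gallauer \cite{BG23b}. Noetherianity of $\mathrm{Spc}(\K(G,R))$ will follow at once, since each $\mathrm{Spc}(\mathbf{D}^b(\mathrm{mod}(\WGH,R)))$ is already Noetherian.

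To build $\Phi_H$, observe that permutation $R[\WGH]$-modules are lattices, so the fully faithful strongly monoidal inclusion $\perm(\WGH,R)\hookrightarrow \mathrm{mod}(\WGH,R)$, followed by the localization $\mathbf{K}^b(\mathrm{mod}(\WGH,R))\to \mathbf{D}^b(\mathrm{mod}(\WGH,R))$ and idempotent completion, produces a tt-functor
$$\theta_{\WGH}\colon \K(\WGH,R)\to \mathbf{D}^b(\mathrm{mod}(\WGH,R)).$$
Set $\Phi_H := \check\psi^{G,H}\circ \mathrm{Spc}(\theta_{\WGH})$, where $\check\psi^{G,H}$ is the triangular $H$-fixed-point map of Definition \ref{def:triangular fixed points}. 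Continuity is automatic as $\Phi_H$ is a composition of spectral maps.

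For surjectivity of $\coprod_H \Phi_H$, Corollary \ref{cor:joitly conservaty for T} together with \cite[Theorem 1.3]{BCHS23} yield a surjection $\coprod_{\pp\in\mathrm{Spec}(R)}\mathrm{Spc}(\K(G,k(\pp)))\twoheadrightarrow \mathrm{Spc}(\K(G,R))$ induced by the base-change functors $g_\pp$ restricted to compact parts. For each residue field $k=k(\pp)$, the Balmer--Gallauer decomposition of $\mathrm{Spc}(\K(G,k))$ in modular characteristic \cite{BG23b} (the non-modular case being trivial, as $\mathrm{Spc}(\K(G,k))$ is then a point) combined with the identification of modular and triangular fixed points on spectra from Corollary \ref{generalized mod on spectra} yield surjectivity of the analogous map $\coprod_H \Phi_H^{k}$ built over $k$. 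Every ingredient of $\Phi_H$—restriction to $N_G(H)$, the additive quotient $\Psi^N$, the homeomorphism $\mathrm{Spc}(F)$ of Lemma \ref{spectrum of Ktilda}, and the functor $\theta_{\WGH}$—is natural with respect to the base change $R\to k(\pp)$, giving a commutative diagram relating $\Phi_H$ and $\Phi_H^{k(\pp)}$. Combining this with the first surjection upgrades the combined surjectivity of $\coprod_{H,\pp}\Phi_H^{k(\pp)}$ to the desired surjectivity of $\coprod_H \Phi_H$.

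For Noetherianity, Corollary \ref{spectrum of rep} together with van der Kallen's cohomological finite generation \cite{vdk23} show that each $\mathrm{Spc}(\mathbf{D}^b(\mathrm{mod}(\WGH,R)))$ is homeomorphic to the Noetherian space $\mathrm{Spec}^h(H^*(\WGH,R))$. Since $\mathrm{Sub}(G)$ is finite, $\coprod_H \mathrm{Spc}(\mathbf{D}^b(\mathrm{mod}(\WGH,R)))$ is a finite disjoint union of Noetherian spaces, hence Noetherian; and the continuous surjective image of a Noetherian space is Noetherian, giving the conclusion. The main technical obstacle is the naturality verification in the surjectivity step, especially for the inverse homeomorphism $\mathrm{Spc}(F)^{-1}$ appearing in the definition of $\check\psi^{G,H}$; this follows from the naturality of $F$ itself in $R$, but requires a careful unpacking of the construction of triangular fixed points, since the field-case equivalence of categories underlying $F$ is unavailable in general.
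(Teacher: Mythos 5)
Your proposal is correct and follows essentially the same route as the paper: triangular fixed points composed with the map on spectra induced by passing to the derived category of lattices, with surjectivity transported from the residue-field case (Balmer--Gallauer together with Corollary \ref{generalized mod on spectra}) through the fiberwise surjection coming from Corollary \ref{cor:joitly conservaty for T} and \cite[Theorem 1.3]{BCHS23}, and Noetherianity drawn from Corollary \ref{spectrum of rep}. The only cosmetic difference is that you map directly into $\mathbf{D}^b(\mathrm{mod}(\WGH,R))$, whereas the paper first factors through $\mathbf{D}^{\mathrm{perm}}(R\WGH)$ and then appends the surjection $\mathrm{Spc}(\mathbf{D}^b(\mathrm{mod}(\WGH,R)))\to\mathrm{Spc}(\mathbf{D}^{\mathrm{perm}}(R\WGH))$ at the end.
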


\begin{proof}
   For each  subgroup $H\leq G$ we consider the composition 
   \begin{align*}
       \Theta_{H,R}\colon\mathrm{Spc}(\mathbf{D}^\mathrm{perm}(R\WGH))\hookrightarrow \mathrm{Spc}(\K(\WGH;R))&  \xrightarrow{{\psi}^{H}} \mathrm{Spc}(\K(G;R))
   \end{align*}
    where the right hand side map is the one induced by the functor $\Psi^H$ from Construction \ref{cons-triangular fixed points}, and the left hand side map is induced  by the  localization functor $\Upsilon^G\colon \K(G;R)\to \mathbf{D}^\mathrm{perm}(RG)$ which is an injective continuous map.

   Now, consider a prime $\pp$ in $R$. We have a commutative diagram induced by base change along $R\to k(\pp)$
   \[
    \begin{tikzcd}
  \K(G;R)  \arrow[d] \arrow[r,"\Psi^H"] & \K(\WGH, R)  \arrow[r,"\Upsilon^G"] \arrow[d] & \mathbf{D}^\mathrm{perm}(R\WGH) \arrow[d]  \\
  \K(G;k(\pp)) \arrow[r,"\Psi^H"] &  \K(\WGH, k(\pp)) \arrow[r,"\Upsilon^G"] & \mathbf{D}^\mathrm{perm}(k(\pp)\WGH)
    \end{tikzcd}
   \]
 which give us a commutative diagram of topological spaces 
\[
\begin{tikzcd}
           \displaystyle   \coprod_{H\in \mathrm{Sub}(G)} \mathrm{Spc}(\mathbf{D}^{\mathrm{perm}}(R\WGH)) \arrow[rr,"\coprod\Theta_{H,R}^\ast"] & & \mathrm{Spc}(\K(G;R)) \\
    \displaystyle  \coprod_{H\in \mathrm{Sub}(G)}  \coprod_{\pp\in \mathrm{Spec}(R)} \mathrm{Spc}(\mathbf{D}^{\mathrm{perm}} (k(\pp)\WGH)) \arrow[rr,"\coprod\Theta_{H;k(\pp)}^\ast"] \arrow[u]  & & \displaystyle \coprod_{\pp\in \mathrm{Spec}(R)} \mathrm{Spc}(\K(G;k(\pp)) \arrow[u]
    \end{tikzcd}
\] The right-vertical map is surjective by Theorem \ref{cor:joitly conservaty for T} combined with \cite[Theorem 1.3]{BCHS23}. We claim that the bottom map is surjective as well. Note that this would complete the proof. Since  $k(\pp)$ has characteristic $p$, then the map 
\[
\displaystyle \coprod_{H\in \mathrm{Sub}(G)}\mathrm{Spc}(\mathbf{D}^\mathrm{perm}(k(\pp)))\xrightarrow{\coprod\Theta_{H;k(\pp)}^\ast} \mathrm{Spc}(\K(G;k(\pp)))
\]
agrees with the map on spectra considered in  \cite[Corollary 7.2]{BG25} which is surjective. Indeed, in this case the category  $\mathbf{D}^\mathrm{perm}(k(\pp)\WGH)$ agrees with $\mathbf{D}^b(k(\pp)\WGH)$. It follows that  $\coprod\Theta_{H;k(\pp)}^\ast$ is surjective. Therefore, we deduce that the top map from the previous square is surjective as we wanted.

On the other hand, the inclusion $\mathbf{D}^\mathrm{perm}(RG)\subseteq \mathbf{D}^b(\mathrm{mod}(G;R))$ induces a continuous surjection $\mathrm{Spc}(\mathbf{D}^b(\mathrm{mod}(G;R)))\to \mathrm{Spc}(\mathbf{D}^\mathrm{perm}(RG))$. Thus we obtain a surjective map 
\[
\coprod_{H\leq G} \mathrm{Spc}(\mathbf{D}^b(\mathrm{mod}(\WGH;R))) \to \coprod_{H\leq G} \mathrm{Spc}(\mathbf{D}^\mathrm{perm}(R\WGH))\to \mathrm{Spc}(\K(G;R)).
\]
 As an immediate consequence, we get that $\mathrm{Spc}(\K(G;R))$ is Noetherian since the spaces $\mathrm{Spc}(\mathbf{D}^b(\mathrm{mod}(\WGH;R)))$ are Noetherian; see Corollary~\ref{spectrum of rep}. 
\end{proof}

\subsection{The general case} We have now all the ingredients to prove Noetherianity of the spectrum: 

\begin{corollary}\label{cover of spc}
    Let $G$ be a finite group and $R$ be a commutative Noetherian ring.  Then  $\Spc(\K(G;R))$ is a Noetherian space.
\end{corollary}

\begin{proof}
  By Lemma \ref{cover by p-subgroups}, it suffices to prove the claim for $p$-groups. For a $p$-group, the claim then follows from Proposition \ref{prop-reduction-Rcharp}, where the required hypotheses are verified in Proposition \ref{prop-p-invertible} and Theorem \ref{cover of spc for p-group}.
\end{proof}

\subsection{A set partition of the spectrum}  The goal of this section is to show that the base change functors $\iota_\pp^\ast$ induce a partition of the spectrum of $\K(G;R)$. We begin with the case where $G$ is a $p$-group. 

\begin{proposition}\label{prop-setpartition}
    Let $G$ be a finite $p$-group and $R$ be a commutative Noetherian ring. For $\pp\in \Spec(R)$, write $\iota_\pp^\ast$ to denote the functor induced by base change along $R\to k(\pp)$. Then 
    \[
   \coprod\mathrm{Spc}(\iota_\pp^\ast) \colon \coprod_{\pp\in \mathrm{Spec}(R)} \mathrm{Spc}(\K(G;k(\pp))\to \mathrm{Spc}(\K(G;R))
    \]
    is bijective. 
\end{proposition}

 We need some preparation. For now, assume that $G$ is a $p$-group.

\begin{notation}
  Let $H\leq G$ be a subgroup. We write  $\rho^G_H$ to denote $\Spc(\res^G_H)$. Let $\pp\in \Spec(R)$. We write $\check\iota_\pp^\ast$ to denote $\mathrm{Spc}(\iota_\pp^\ast)$. If the characteristic of $k(\pp)$ is $p$, then we write $\check\psi^H_{k(\pp)}$ as shorthand for  
  \[
  \Spc(\Upsilon^{\WGH}\circ\Psi^{\WGH})\colon \Spc(\mathbf{D}^b(k(\pp)\WGH)) \to \mathrm{Spc}(\K(G;k(\pp))).
  \]
\end{notation}

\begin{notation}
 Given $\pp\in \Spec(R)$ with $\mathrm{char}(k(\pp))=p$, $H\leq G$, and $\P\in V_{\WGH;k(\pp)}:=\Spc(\mathbf{D}^b(k(\pp)\WGH))$, we set
 \[
 \P_{G,R}(H,\P,\pp):= \check\iota_\pp^\ast(\check\psi^H_{k(\pp)}(\P))\in \Spc(\K(G;R)).
 \]
\end{notation}

\begin{remark}\label{rem-partition-fieldcase}
  Let $k$ be a field of positive characteristic $p$, and let $G$ be a $p$-group. By \cite[Proposition 7.32]{BG25} we have that the modular fixed points functors induce a set partition: 
  \[
  \coprod \check{\psi}^H_k\colon \coprod_{H\in \mathrm{Sub}(G)/G} V_{\WGH;k}\xrightarrow[]{\simeq} \Spc(\K(G;k)).
  \] 
  In particular, the image of a point $\P\in V_{\WGH;k}$ under the map from above is denoted by $\P_{G;k}(H;k)$ which is the notation from \cite{BG25}. 
\end{remark}

\begin{remark}\label{rem-psi-basechange}
  For any residue field $k(\pp)$ of characteristic $p$, note that   $\check\iota_\pp^\ast \P_{G;k}(H,\P)$ corresponds to $\P_{G,R}(H,\P,\pp)$. 
\end{remark}

\begin{proof}[Proof of Proposition \ref{prop-setpartition}]
    Since the family of functors $\iota_\pp^\ast$ indexed over all primes of $R$ is jointly conservative, we know that $\coprod\check\iota^\ast_\pp$ is surjective. Moreover, by the naturality of Balmer's comparison map, it is enough to verify that, for each prime $\pp$ of $R$, the map $\check\iota^\ast_\pp$ is injective. Note that if the residue field $k(\pp)$ has characteristic coprime to $p$, then $\check\iota^\ast_\pp$ is trivially injective since $\Spc(\K(G;k(\pp)))$ is a point. Hence, assume otherwise, and write $k$ to denote $k(\pp)$.

    In view of Remark \ref{rem-partition-fieldcase}, it is enough to check that  $\check\iota_\pp^\ast\check\psi^H_k$ is injective for all $H\leq G$. The case of $H=G,1$ are straightforward. Indeed, 
    since the domain of $\check\psi^G_k$ is a point, this is trivial. Now, we have that $\check\psi^1_k=\Spc(\Upsilon^G_{k})\colon V_{G;k}\to \Spc(\K(G;k))$, and $\check\iota_\pp^\ast\check\psi^1_k$ agrees with the map 
    \[
    V_{G;k}\xrightarrow[]{} V_{G,R} \xrightarrow[]{\Upsilon^G}  \Spc(\K(G;R))
    \]
    where the first map is induced by base change between categories of representations. Both of these maps are injective, hence $\check\psi^1_k$ is injective as we wanted. 

     We will show that $\check\iota_\pp^\ast$ is injective  by induction on the cardinality of $G$ which is $p^n$. The base case is $n=1$. This follows by the previous discussion. Assume then that $n>1$, and let $H\leq G$ be a proper subgroup. We are left to verify that $\check\iota_\pp^\ast\check\psi^H_k$ is injective. Let  $\P$ and $\P'$ be in $V_{\WGH;k}$. If $\P_{G,R}(H,\P,\pp)=\P_{G,R}(H,\P',\pp)$, then unpacking the definitions it is easy to see that 
     \begin{equation}\label{eq-rho1}
         \rho^G_N (\P_{N,R}(H,\P,\pp))= \rho^G_N (\P_{N,R}(H,\P',\pp))
     \end{equation}
     where $N$ denotes $N_G(H)$. Indeed, one only needs to verify that the functor $\iota_\pp^\ast$ is compatible with the restriction functor $\mathrm{res}^G_N$, which is straightforward.  Now, by Recollection \ref{rec-separable-algebra-AH} and \cite[Theorem 3.19]{Bal16} applied to the separable commutative algebra $A_N$, we obtain a diagram
     \[
      \begin{tikzcd}
         \displaystyle \coprod_{[g]\in H\backslash G/H} \Spc(\K(N\cap{}^gN;k))\arrow[r,shift left=1] \arrow[r,shift left=-1] \arrow[d] & \Spc(\K(N;k)) \arrow[d] \arrow[r,"\rho^G_H"] & \mathrm{supp}(k(G/N)) \arrow[d] \\ 
         \displaystyle   \coprod_{[g]\in H\backslash G/H} \Spc(\K(N\cap{}^gN;R))\arrow[r,shift left=1] \arrow[r,shift left=-1] & \Spc(\K(N;R)) \arrow[r,"\rho^G_H"] & \mathrm{supp}(k(G/N))
      \end{tikzcd}
     \]
     where the horizontal diagrams are coequalizers of topological spaces (see see \cite[Proposition 4.7]{BG25}), and the vertical maps are induced by $\iota^\ast_\pp$. In fact, this diagram is commutative using again that restriction functor commutes with base change.  

     By Equation \ref{eq-rho1}, we deduce that there exists $g\in G$ and  $\mathcal{Q}\in \Spc(\K(N\cap {}^gN;R))$ such that 
     \begin{equation}
         \label{eq-rho2}
          \rho^N_{N\cap {}^gN}\mathcal Q = \P_{N,R}(H,\P,\pp) \; \mbox{and} \; (\rho^{{}^gN}_{N\cap {}^gN}\mathcal Q)^g = \P_{N,R}(H,\P',\pp).
     \end{equation}
    By the induction hypothesis and Remark \ref{rem-partition-fieldcase} we deduce that  $\mathcal{Q}$ is necessarily  of the form 
     \[
     \P_{N\cap ^{g}N,R}(L,\P'',\pp)=\check\iota_\pp^\ast \P_{N\cap ^{g}N,k}(L,\P'')
     \]
     for some $L\leq N\cap {}^gN$, and some $\P''\in V_{\Weyl{(N\cap {}^gN)}{L},k}$ (see Remark \ref{rem-psi-basechange}). Now, by the commutativity of the above diagram, the injectivity of the middle vertical map (by the induction hypothesis) and Equation \ref{eq-rho2}, we deduce that 
     \[
     \rho^N_{N\cap {}^gN}\mathcal \P_{N\cap ^{g}N,k}(L,\P'') = \P_{N,k}(H,\P) \; \mbox{ and } \; (\rho^{{}^gN}_{N\cap {}^gN}\mathcal \P_{N\cap ^{g}N,k}(L,\P''))^g = \P_{N,k}(H,\P').
     \]
     We are in the same situation as in the proof of \cite[Proposition 7.14]{BG25} which gives us that $\P=\P'$, as we wanted. This concludes the proof. 
\end{proof}

\begin{theorem}\label{thm-setpartition-general}
 Let $G$ be a finite group and $R$ be a commutative Noetherian ring. Then 
    \[
   \coprod\mathrm{Spc}(\iota_\pp^\ast) \colon \coprod_{\pp\in \mathrm{Spec}(R)} \mathrm{Spc}(\K(G;k(\pp))\to \mathrm{Spc}(\K(G;R))
    \]
    is bijective.    
\end{theorem}
\begin{proof}
  Again, surjectivity follows from the conservativity of the functors $\iota_\pp^\ast$. By the naturality of Balmer's comparison map, it suffices to verify that each component of the above map is injective.

  Recall that $\mathcal{E}$ denotes the family of subgroups of $G$ of $p$-power order, where $p$ ranges over the prime divisors of $|G|$. Fix $\pp\in \mathrm{Spec}(R)$ such that $\mathrm{char}(k(\pp))$ divides $|G|$, since otherwise the claim is trivial. Write $k:=k(\pp)$. Applying \cite[Theorem 3.19]{Bal16} to Proposition \ref{prop-sep-A} yields the following diagram
   \[
      \begin{tikzcd}
         \displaystyle \coprod_{\substack{H,K\in \mathcal{E}\\ [g]\in H\backslash G/K}} \Spc(\K(H\cap{}^gK;k))\arrow[r,shift left=1] \arrow[r,shift left=-1] \arrow[d] & \displaystyle \coprod_{H\in \mathcal{E}}\Spc(\K(H;k)) \arrow[d] \arrow[r,""] & \Spc(\K(G;k)) \arrow[d] \\ 
         \displaystyle  \coprod_{\substack{H,K\in \mathcal{E}\\ [g]\in H\backslash G/K}} \Spc(\K(H\cap{}^gK;R)) \arrow[r,shift left=1] \arrow[r,shift left=-1] & \displaystyle \coprod_{H\in \mathcal{E}}\Spc(\K(H;R)) \arrow[r,""] & \Spc(\K(G;R))
      \end{tikzcd}
     \]
    in which the horizontal arrows are coequalizers in the category of topological spaces, and the vertical maps are induced by $\iota_\pp^\ast$. The diagram commutes by compatibility of restriction with base change. A straightforward diagram chase, together with Proposition \ref{prop-setpartition}, then shows that the rightmost horizontal map is injective.
\end{proof}

\begin{remark}
In \cite{DG25}, the author, in collaboration with U.~Dubey, addresses a set-theoretic description of the Balmer spectrum for general finite groups. Although we could simply refer to that work, we have chosen to include an alternative proof here in order to keep the present results self-contained and independent.
\end{remark}

\section{Beyond big tt-categories}\label{sec: beyond big}

In this section, we discuss an application of Lemma~\ref{coro 2} to tt-categories with small coproducts that are not necessarily big; that is, tt-categories that need not be rigidly compactly generated. Our main source of examples comes from the author’s work in \cite{Gom24b}. For the reader’s convenience, we briefly recall the relevant definitions.

\begin{recollection}
    Let $k$ be a field and $G$ be a group with a finite-dimensional model for the classifying space for proper actions (e.g. a finite group, a discrete $p$-toral group, etc.). The stable module $\infty$-category $\mathrm{StMod}(kG)$ is defined as the underlying $\infty$-category of the Quillen model structure on the category of $kG$--modules $\mathrm{Mod}(kG)$ given as follows: 
    \begin{itemize}
    \item[(i)] A fibration is a surjective map.
    \item[(ii)] A cofibration  is an injective map with Gorenstein projective  cokernel.  
    \item[(iii)] A weak equivalence  is a stable isomorphism, that is,  it is an isomorphism in $\underline{\StMod}(kG)$. 
\end{itemize}
 Here, $\underline{\StMod}(kG)$ denotes  the category whose objects are $kG$--modules and whose morphisms are given by complete cohomology \cite[Definition 1.4]{Gom24b}.  In particular, $\StMod(kG)$ is a presentable, symmetric monoidal, stable $\infty$-category where the tensor product commutes with colimits in each variable separately \cite[Proposition 2.4]{Gom24b}. The monoidal structure is inherited from the one on $\Mod(kG)$ given by the tensor product over $k$ endowed with the diagonal action of $G$. Note that the monoidal unit is $k$ with the trivial action of $G$.  See \cite[Section 2]{Gom24b} and references therein for further details.
\end{recollection}

\begin{proposition}\label{generation of stab}
    Let $k$ be a field of characteristic $p$, and $G$ be a group with a finite-dimensional model 
$X$ for the classifying space for proper actions. Consider the collection $\mathcal{E}$ of elementary abelian finite $p$-subgroups of $G$, along with the family of restriction functors 
\[
\{\mathrm{res}_E\colon \StMod(kG)\to \StMod(kE)\}_{E\in \mathcal{E}}.
\]
Then  $k\in \mathrm{Loc}^\otimes(\coprod_{E\in \mathcal{E}}\mathrm{ind}_E (k))$ where $\mathrm{ind}_E$ denotes the induction functor which is the left adjoint of $\mathrm{res}_E$.  In particular, Building holds with respect to the family $\{\mathrm{res}_E\}_{E\in \mathcal{E}}$. 
\end{proposition}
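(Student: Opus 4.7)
The plan is to establish the generation $k\in\mathrm{Loc}^\otimes(\coprod_{E\in\mathcal{E}}\mathrm{ind}_E(k))$ in two reduction steps: first from $G$ to its finite subgroups, exploiting the finite-dimensional $G$-CW model $X$ for $\underline{E}G$, and second from each finite subgroup to its elementary abelian $p$-subgroups via a Chouinard-type generation result in the stable module category of a finite group. The concluding claim about Building will then be immediate from Lemma \ref{coro 2}, once one notes that the family $\{\mathrm{res}_E\}_{E\in\mathcal{E}}$ satisfies Hypothesis \ref{hyp: tt-cats with small coproducts}: each $\mathrm{res}_E$ is a coproduct-preserving tt-functor, $\mathrm{ind}_E$ is its coproduct-preserving left adjoint, and Frobenius reciprocity $\mathrm{ind}_E(\mathrm{res}_E(x)\otimes y)\cong x\otimes\mathrm{ind}_E(y)$ is precisely the projection formula for the left adjoint version of the hypothesis.

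For the first step, properness of the action of $G$ on $X$ forces all isotropy groups to be finite, so the augmented cellular chain complex
$$0\to C_d(X;k)\to\cdots\to C_0(X;k)\to k\to 0$$
is a bounded resolution of the trivial module whose terms are direct sums of permutation modules $k[G/F]=\mathrm{ind}_F^G(k)$ with $F\leq G$ finite. Splitting the resolution into short exact sequences of $kG$-modules, each of which becomes a distinguished triangle in $\StMod(kG)$ (via the model structure of \cite{Gom24b}), I would place $k$ in the thick subcategory of $\StMod(kG)$ generated by $\{k[G/F]:F\leq G\text{ finite}\}$, giving $k\in\mathrm{Loc}^\otimes(\coprod_{F\leq G\text{ finite}}\mathrm{ind}_F^G(k))$.

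For the second step, I would fix a finite subgroup $F\leq G$ and invoke the analogous generation result in the stable module category of $F$, namely
$$k\in\mathrm{Loc}^\otimes_{\StMod(kF)}\bigl(\coprod_{E\in\mathcal{E}(F)}\mathrm{ind}_E^F(k)\bigr),$$
which is the Chouinard-type statement for finite groups. Granting this, the coproduct-preserving and triangle-preserving functor $\mathrm{ind}_F^G$ together with the transitivity $\mathrm{ind}_F^G\circ\mathrm{ind}_E^F\cong\mathrm{ind}_E^G$ and the projection formula for $\mathrm{ind}_E^G$ (which ensures that the localizing subcategory generated by modules induced from the $E$'s is tensor-closed) produces the containment of $k[G/F]=\mathrm{ind}_F^G(k)$ in $\mathrm{Loc}^\otimes(\coprod_{E\in\mathcal{E}}\mathrm{ind}_E^G(k))$. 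Combined with Step 1, this completes the generation claim, and then Lemma \ref{coro 2} delivers Building.

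The main obstacle is the generation statement in Step 2. Classical Chouinard gives only detection, i.e.\ joint conservativity of $\{\mathrm{res}_E^F\}_{E\in\mathcal{E}(F)}$, and upgrading detection to generation of the monoidal unit in the \emph{big} stable module category of $F$ requires extra input. The cleanest route is to invoke the BIK stratification of $\StMod(kF)$ by cohomological support \cite{BIK11}: the localizing tensor ideal generated by $\coprod_{E}\mathrm{ind}_E^F(k)$ has full cohomological support since elementary abelian $p$-subgroups detect cohomological support, and hence must equal the whole category. A secondary but worthwhile point is to spell out once and for all that short exact sequences of $kG$-modules descend to cofibre sequences in the $\infty$-categorical $\StMod(kG)$ of \cite{Gom24b}, as this is silently used in the dévissage of both steps.
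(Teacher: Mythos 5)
Your argument is essentially the paper's own proof: the cellular chain complex of the finite-dimensional model $X$ builds $k$ from modules induced from finite subgroups, transitivity of induction plus the known generation of $k$ in $\StMod(kF)$ by modules induced from elementary abelian $p$-subgroups reduces to $\mathcal{E}$, and Building then follows from Lemma \ref{coro 2} via the projection formula for $\mathrm{ind}\dashv\mathrm{res}$. Your explicit justification of the finite-group input via BIK stratification simply spells out what the paper cites as known, so there is no substantive difference.
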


\begin{proof}
    By the hypothesis on $G$, we know that the augmented cellular chain complex $C_\ast(X;k)$ of $X$ with coefficients in $k$ is an exact complex of $kG$--modules   
    \[
    0\to C_n\to...\to C_0\to k\to 0
    \]
    such that each $C_i$ is a direct sum  of modules of the form $\mathrm{ind}_H(k)$, for $H$ a finite subgroup of $G$. This immediately give us that 
    \[
    k\in \mathrm{Loc}^\otimes\left(\coprod_{H\in \mathcal{F}}\mathrm{ind}_H (k)\right)
    \]
    where $\mathcal{F}$ denotes the family of finite subgroups of $G$. We can change the family $\mathcal{F}$ by $\mathcal{E}$ using that $\{\mathrm{ind}_E(k)\mid E\leq H\}$ generates $k$ in $\mathrm{StMod}(kH)$ for any $H\in \mathcal{F}$, and the fact that $\mathrm{ind}_H^G\circ \mathrm{ind}_E^H\simeq \mathrm{ind}_E^G$. 

    On the other hand, it is well known that $\mathrm{ind}\dashv \mathrm{res}$ satisfy the projection formula. Thus the second claim is a consequence of Corollary~\ref{coro 2}.
\end{proof}

\begin{remark}
The stable module $\infty$-category for a finite group is well known to be rigidly-compactly generated. However, the same does not necessarily hold for infinite groups. One can deduce from the conservativity of the family of restriction functors along the family of finite subgroups that the stable category is compactly generated, however, the monoidal unit fails to be compact (see \cite[Example 2.3, Remark 2.7]{Gom24a}). Therefore, our approach indeed extends beyond big tt-categories. 
\end{remark}

\begin{remark}
It is straightforward to verify that the family $\mathcal{E}$ from Proposition \ref{generation of stab} can be replaced by a set of representatives, up to conjugacy, of elementary abelian subgroups, or by any family of subgroups containing them. On the other hand, the previous proposition provides a detection result in terms of restrictions to elementary abelian subgroups. However, this is a well-known consequence of the existence of $C_\ast(X)$ and Chouinard's theorem for finite groups.  
\end{remark}


\bibliographystyle{alpha}
\bibliography{mybibfile} 

\end{document}